\newenvironment{customthm}[1]
  {\innercustomthm}
  {\endinnercustomthm}
\newtheorem{theorem}{Theorem}
\newtheorem{lemma}[theorem]{Lemma}
\newtheorem{corollary}[theorem]{Corollary}
\newtheorem{proposition}[theorem]{Proposition}
\theoremstyle{definition}
\numberwithin{equation}{section}\numberwithin{theorem}{section}
\newcounter{stepctr}
{\end{list}}
\def\XXint#1#2#3{{\setbox0=\hbox{$#1{#2#3}{\int}$}
     \vcenter{\hbox{$#2#3$}}\kern-.5\wd0}}
\newcommand{\mc}[1]{\mathcal{#1}}
\newcommand{\mbb}[1]{\mathbb{#1}}
\newcommand{\circo}{\accentset{\circ}}
\providecommand{\abs}[1]{\lvert#1\rvert}
\newcommand{\M}{\mathcal{M}}
\newcommand{\R}{\mbb{R}}
\newcommand{\Rp}{\text{Rm}^{\perp}}
\newcommand{\ra}{\rangle}
\newcommand{\la}{\langle}
\newcommand{\e}{\varepsilon}
\newcommand{\p}{\partial}
\begin{document}
\title[Cylindrical Estimates for High Codimension MCF]{Cylindrical Estimates for High Codimension Mean Curvature Flow}
\author{Huy The Nguyen}
\address{School of Mathematical Sciences\\
Queen Mary University of London\\
Mile End Road\\
London E1 4NS}
\email{h.nguyen@qmul.ac.uk}
\subjclass[2000]{Primary 53C44}

\begin{abstract}
We study high codimension mean curvature flow of a submanifold $\M^n$ of dimension $n$ in Euclidean space $\R^{n+k}$ subject to the quadratic curvature condition $ |A|^{2}\leq c_n |H|^{2}, c _n = \min\{ \frac{4}{3n} , \frac{1}{n-2}\}$. This condition extends the notion of two-convexity for hypersurfaces to high codimension submanifolds. We analyse singularity formation in the mean curvature flow of high codimension by directly proving a pointwise gradient estimate. We then show that near a singularity the surface is quantitatively cylindrical. 
\end{abstract}
\maketitle
\section{Introduction}
Let $ F_0: \mc M^n \rightarrow \R^{n+k}$ be a smooth immersion of a compact manifold $\mc M^n $. The mean curvature flow starting from $\mc M_0$ is the following family of submanifolds
\begin{align*}
F:\mc M \times [0, T) \rightarrow \R^{n+k}
\end{align*}  
such that
\begin{align*}
\frac{\partial}{\partial t}F (p,t) & = H(p,t), \quad p \in \mc M, t \in [0,T)\\
F(p,0) &=F_0(p)
\end{align*}
where $ H(p,t)$ denotes the mean curvature vector of $ \mc M_t= F_t(p)=F(p,t)$ at $p$. It is well known that this forms a system of (weakly) parabolic partial differential equations. Geometrically, the mean curvature flow is the steepest descent flow for the area functional of a submanifold and hence is a natural curvature flow.
   
Flow by mean curvature has been investigated by many authors since Huisken's seminal paper, \cite{Hu84} (see also Brakke \cite{Brakke1978}). A key central theme of mean curvature flow (and geometric flows in general) is the formation of singularities. In the case of mean curvature flow of mean convex hypersurfaces in Euclidean space  White \cite{White2000}, \cite{White2003} and Huisken-Sinestrari \cite{HuSi99a}, \cite{HuSi99b}, \cite{HuSi09} have developed a deep and far reaching analysis of the formation of singularities. The purpose of this paper is to obtain a suitable generalisation of these results for high codimension submanifolds.

A crucial step in the study of singularity formation in the mean convex mean curvature flow is the convexity estimate. This states that in regions of large mean curvature, the second fundamental form is almost positive definite. A similar estimate exists for three dimensional Ricci flow (Hamilton-Ivey estimate) \cite{Hamilton1995a} and for four manifolds with positive isotropic curvature \cite{Hamilton1997} (for recent work on PIC see \cite{Brendle2016}, \cite{Brendle2017}). The estimate in the mean curvature flow case following Huisken-Sinestrari uses Stampacchia iteration, the Michael-Simons-Sobolev inequality together with recursion formulae for symmetric polynomials. In contrast, White uses compactness theorems of geometric measure theory together with the rigidity of strong maximum principle for the second fundamental form. Haslhofer-Kleiner \cite{Haslhofer2017},\cite{Haslhofer2017a} develops an alternative approach to White's results based on Andrews's non-collapsed result for the mean curvature flow. We note that the Huisken-Sinestrari approach does not work for positive mean curvature in dimension two. The analysis of singularities and resulting mean curvature flow with surgery was developed independently by \cite{BrHu16} and \cite{Haslhofer2017a}.

Most of the work done on mean curvature flow in higher codimension uses assumptions on the image of the Gauss map. They have either considered graphical submanifolds, \cite{Chen2002},\cite{Li2003}, \cite{Wang2002},\cite{Wang2004}, submanifolds with additional symplectic or Lagrangian structure \cite{Smoczyk2002}, \cite{Chen2001},\cite{Wang2001},\cite{Smoczyk2004}, \cite{Neves2007} or using that convex subsets of the Grassmannian are preserved  by the mean curvature flow, \cite{Tsui2004},\cite{Wang2003},\cite{Wang2005}.

We will consider conditions on the second fundamental form. In high codimension, the mean curvature flow is more complex than the hypersurface case. Firstly, the presence of the normal curvature greatly complicates reaction terms in the evolution equations for the second fundamental form. Secondly, there do not seem to be many useful invariant conditions.

In this paper, we will extend the singularity analysis of \cite{HuSi09},\cite{HuSi99a} to higher codimension. However, in high codimension, since we have no direct equivalent of positive mean curvature, we need to consider an alternative condition. Note that on a compact submanifold if $ H>0$, there there exists a $c>0$ such that $ |A|^2 \leq c |H|^2$ and in fact this bound is preserved by the (co-dimension one) mean curvature flow. In fact such a condition makes sense for all codimensions. This lead Andrews-Baker \cite{Andrews2010} to consider the following pinching condition on the second fundamental form in high codimension
\begin{align*}
|A|^2 - c|H|^2<0, \ c < \frac{4}{3n}, \ |H| >0
\end{align*}
which was shown to be preserved by the mean curvature flow. For $c = \min\{\frac{4}{3n},\frac{1}{n-1}\}$, remarkably they were able to prove convergence to a round sphere. We note that the condition $ |A| -\frac{1}{n-1}H^2 <0, H>0$ implies convexity in codimension one. In this paper, we will wish to study singularity formation in high codimension mean curvature flow and will consider the following curvature pinching   
\begin{equation}
\label{eqn_pinching}
|A|^2-c_n |H|^2 \leq-\varepsilon |H|^2
\end{equation}
for some $\varepsilon >0$, where 
\begin{equation*}
c_n := \min\left \{\dfrac{4}{3n}, \frac{1}{n-2}\right \},\quad \text{if $n\geq 5$}.
\end{equation*}
In particular, note that 
\begin{align*}
c_n = \left\{
\begin{array}{cc}
\nicefrac{4}{15}, & n = 5, \\
\nicefrac{4}{18}, & n = 6, \\
\nicefrac{4}{21}, & n = 7, \\
\nicefrac{1}{n-2}, & n \geq 8.
\end{array}
\right.
\end{align*}
Such a solution is said to be quadratically bounded (we note here that this condition implies that $\M$ satisfies positive isotropic curvature) . It is very important to note that $ \frac{3}{n+2}-c_n > 0$ (for all $n\geq 2$). Our method is a mixture of B. White \cite{White2003},\cite{White2000} (as well as Haslhofer-Kleiner \cite{Haslhofer2017}) and Huisken-Sinestrari \cite{HuSi99b},\cite{HuSi09}. In Huisken-Sinestrari the mean convexity condition is used to derive asymptotic convexity. This in turn is used to derive the asymptotic cylindrical estimates. And finally, the key gradient estimate follows from asymptotic cylindricality. In high codimension, we do not have a condition on the second fundamental form like convexity. However, we do have a notion of cylindricality and while it turns out that we can prove asymptotic cylindrical estimates without first proving asymptotic convexity, we instead directly prove the pointwise gradient estimates using the quadratic curvature condition.

\begin{customthm}{\ref{thm_gradient}}\label{thm:mainThm}
Let $ \M_t , t \in [0,T)$ be a closed $n$-dimensional quadratically bounded solution to the mean curvature flow in the Euclidean space, $ \R^{n+k}$ with $n \geq 2$, that is 
\begin{align*}
|A|^2 - c|H|^2 <0, |H| >0  
\end{align*}
with $ c \leq \frac{4}{3n}$.
 Then there exists a constant $ \gamma_1= \gamma_1 (n, \mc M_0)$ and a constant $ \gamma_2 = \gamma_2 ( n , \M_0)$ such that the flow satisfies the uniform estimate 
\begin{align*}
|\nabla A|^2 \leq \gamma_1 |A|^4+\gamma_2
\end{align*}
for all $ t\in [0, T)$.
\end{customthm}

Once we have shown the gradient estimate, we can use the Arzela-Ascoli theorem to derive local compactness theorems. We then can derive asymptotic cylindrical estimates following White \cite{White2003} by using the rigidity of the evolution equation for the second fundamental form.

\begin{customthm}{\ref{thm_cylindrical}} [cf \cite{HuSi09}]
Let $ F:[0,T)\times \mc M^n \rightarrow \R^{n+k}$ be a smooth solution to the mean curvature flow such that $ F_0(p) = F(0,p)$ is compact and quadratically bounded. Then for all $ \e >0$ there is a $H_0>0$ such that if $ |H(p,t)| \geq H_0$ then 
\begin{align*}
\frac{|A|^{2}}{|H|^2}\leq \left (\frac{1}{n-1}+\e  \right)
\end{align*} 
\end{customthm}
The above constants are optimal for $n \geq 8$ as the following example shows. Consider the submanifold
\begin{align*}
\mbb S^{n-2}(\e) \times \mbb S^2(1) \subset \R^{n-1}\times \R^{3}
\end{align*}
with $ \e$ small and positive. The second fundamental form of this submanifold is given by 
\begin{align*}
h\mid_{(\e x, y)}& = 
\left ( 
\begin{array}{ccccc}
\frac{1}{\e}& & &&\\
& \ddots & & & \\
& & \frac1 \e  & & \\
 & & &0 & \\
 & & & & 0\\ 
\end{array}
\right) ( x,0 )
+\left( 
\begin{array}{ccccc}
0 & & & & \\
& \ddots & & & \\
& &  0 & &  \\
& & & 1 & \\
& & & & 1
\end{array}
\right) (0,y)
\end{align*}
We compute that 
\begin{align*}
\frac{|A|^2}{|H|^2}= \frac{n-1+\e^2}{( n-1+\e )^2}\geq \frac{1}{n-2}. 
\end{align*}
We see that if $\frac{|A|^2}{|H|^2}> \frac{1}{n-2}$ then the mean curvature flow does not necessarily develop $\mbb S^{n-1}\times \R$ cylindrical singularities as the above examples are asymptotically $ \mbb S^{n-2}\times \R^2$. Also we can repeat the same construction above with $S^{n-1}(\e) \times \mbb S^1(1)\subset \R^n \times \R ^ 2 $ satisfying $ \frac{|A|^2}{|H|^2} = \frac{ n + \e ^ 2}{ (n+\e)^2)} \geq \frac{1}{n-1}$ which lies in the interior of our curvature cone. Furthermore $\mbb S ^{n-1} \times \R\subset \R ^{n+1}$ with the standard metric also lies in the interior of our curvature cone. Hence using standard connected sum arguments, connected sums of $ \mbb S ^{n-1} \times \mbb S^1 \#\cdots \#  \mbb S ^{n-1} \times \mbb S^1$ lie in the interior on the above curvature condition. Hence nontrivial topologies are allowed by the quadratic curvature condition. In fact, in \cite{Nguyen2018}, by developing a mean curvature flow with surgery in high codimension, we will show that the above topologies, together with $\mbb S^n$ are the only allowed topologies. Finally for $ 2\leq n \leq 7$, it is likely that the above above coefficients are not optimal. In particular, for $n=2$ as shown in \cite{Baker2017}, direct control on the normal curvature in the curvature condition is likely to be required to obtain the optimal pinching condition.       

Furthermore, we can show that not only that a singularity of a high codimension mean curvature flow that the quotient $ \frac{|A|^2}{|H|^2}$ approaches that of the cylinder, that in fact unless the entire blowup surface becomes compact, there are parts of the submanifold that become arbitrarily close to the standard flat cylinder $ \mbb S^{n-1}\times \R \subset \R^{n+1}\subset \R^{n+k}$. Remarkably, the singularity that forms is  codimension one.    


This paper is set out as below. In section two we gather up the relevant preliminary results and fix our notation. In section three, we prove the crucial gradient estimates. The key to the following estimate is that we have $\frac{3}{n+2}-\frac{1}{n-2}> 0. $ The gradient estimate the allows us to control the mean curvature and hence the full second fundamental form on a neighbourhood of fixed size. In section four, we then prove the cylindrical estimates. This uses the rigidity of the evolution equation for $ \frac{|A|^2}{|H|^2}$ reminiscent of B. White's proof of the asymptotical convexity. We also show that unless the entire submanifold becomes spherical, there exists regions of the submanifold that up to the first singular time become arbitrarily close in $C^\infty$ to the standard codimension one cylinder.   

\section{Notation and preliminary results}
We adhere to the notation of \cite{Andrews2010}. A fundamental ingredient in the derivation of the evolution equations is Simons' identity:
\begin{equation}\label{eqn:SimonsId}
\Delta h_{ij}=\nabla_i\nabla_jH+H\cdot h_{ip}h_{pj}-h_{ij}\cdot h_{pq}h_{pq}+2h_{jq}\cdot h_{ip}h_{pq}
-h_{iq}\cdot h_{qp}h_{pj}-h_{jq}\cdot h_{qp}h_{pi}.
\end{equation}
The timelike Codazzi equation combined with Simons' identity produces the evolution equation for the second fundamental form:
\begin{equation}\label{eqn:evolA}
\nabla_{\partial_t}h_{ij}= \Delta h_{ij}+h_{ij}\cdot h_{pq}h_{pq}+h_{iq}\cdot h_{qp}h_{pj}
+h_{jq}\cdot h_{qp}h_{pi}-2h_{ip}\cdot h_{jq}h_{pq}.
\end{equation}
The evolution equation for the mean curvature vector is found by taking the trace with $g_{ij}$:
\begin{equation}
\nabla_{\partial_t}H=\Delta H+H\cdot h_{pq}h_{pq}.
\end{equation}
The evolution equations of the norm squared of the second fundamental form and the mean curvature vector are
\begin{align}
	\frac{\p}{\p t}\abs{A}^2 &= \Delta\abs{A}^2-2\abs{\nabla A}^2+2 \sum_{\alpha, \beta}\bigg( \sum_{i,j}h_{ij\alpha}h_{ij\beta}\bigg)^2+2 \sum_{i,j,\alpha,\beta}\bigg( \sum_p h_{ip\alpha}h_{jp\beta}-h_{jp\alpha}h_{ip\beta}\bigg)^2 \label{eqn:A2}\\
	\frac{\partial}{\partial t}\abs{H}^2 &= \Delta\abs{H}^2-2\abs{\nabla^\bot H}^2+2\sum_{i,j}\bigg( \sum_{\alpha}H_{\alpha}h_{ij\alpha}\bigg)^2. \label{eqn:H2}
\end{align}
The last term in \eqref{eqn:A2} is the squared length of the normal curvature, which we denote by $\abs{\Rp}^2$. For convenience we label the reaction terms of the above evolution equations by
\begin{gather*}
	R_1 = \sum_{\alpha, \beta}\bigg(\!\sum_{i,j}h_{ij\alpha}h_{ij\beta}\!\bigg)^2+\abs{\Rp}^2 \\
	R_2 = \sum_{i,j}\!\bigg(\!\sum_{\alpha}H_{\alpha}h_{ij\alpha}\bigg)^2.
\end{gather*}

\subsection{Preservation of pinching}
We consider the quadratic quantity
\begin{align}
\mc Q = |A|^2+a-c |H|^2 
\end{align}
where $ c$ and $ a$ are positive constants. Combining the evolution equations for $ |A|^2 $ and  $| H|^2$ yields
\begin{align}
\label{eqn_pinchpres}
\partial_t \mc Q & = \Delta \mc Q-2 ( |\nabla A |^2-c |\nabla H|^2 )+2 R_1-2 c R_2.
\end{align}
We have the following Kato type inequality which is a consequence of the Codazzi equation.
 \begin{lemma}
 For any hypersurface $\M_0 \subset \mathbb{R}^{n+k}$ we have that 
 \begin{align}\label{ineqn_Kato}
|\nabla A|^{2}\geq \frac{3}{n+2}| \nabla H|^2.
\end{align}
 \end{lemma}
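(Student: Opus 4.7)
The plan is to prove a pointwise (and hence global) refined Kato inequality, fibrewise in the normal bundle, by decomposing $\nabla h$ into its totally-trace part and a traceless complement. The key input is the Codazzi equation $\nabla_i h_{jk\alpha} = \nabla_j h_{ik\alpha} = \nabla_k h_{ij\alpha}$, which says that for each fixed normal index $\alpha$ the tensor $T_{ijk} := \nabla_i h_{jk\alpha}$ is totally symmetric in the three tangential indices. Since $H_\alpha = g^{pq} h_{pq\alpha}$, the covariant derivative of the mean curvature vector appears as a trace of $T$: writing $V_i := (\nabla H)_{i\alpha}$, we have $V_i = \sum_j T_{ijj} = \sum_j T_{jij} = \sum_j T_{jji}$, the three expressions agreeing by symmetry.

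First I would work at a single point with $\alpha$ fixed and introduce the ``pure-trace'' symmetric $3$-tensor
\begin{equation*}
S_{ijk} = \tfrac{1}{n+2}\bigl(V_i \delta_{jk} + V_j \delta_{ik} + V_k \delta_{ij}\bigr),
\end{equation*}
chosen so that the contraction $\sum_j S_{ijj} = V_i$ reproduces the correct trace. Next I would verify by direct calculation that $|S|^2 = \tfrac{3}{n+2}|V|^2$ and that $\langle T, S\rangle = \tfrac{3}{n+2}|V|^2$ as well; these two identities are immediate once one expands the $\delta$'s and uses the total symmetry of $T$ to collapse each of the three terms in $S$ against $T$ into a copy of $|V|^2$. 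Together they give the orthogonality $\langle T-S, S\rangle = 0$.

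From here the Pythagorean identity
\begin{equation*}
|T|^2 = |T-S|^2 + |S|^2 \geq |S|^2 = \tfrac{3}{n+2}|V|^2
\end{equation*}
holds pointwise. Summing over the normal index $\alpha$ and over tangential indices gives $|\nabla A|^2 \geq \tfrac{3}{n+2}|\nabla H|^2$, which is the claim.

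There is essentially no obstacle here: the proof is entirely algebraic once Codazzi is invoked. The only delicate step is choosing the coefficient $1/(n+2)$ in $S$ so that $S$ has the correct trace, and then keeping track of the combinatorics in expanding $(V_i\delta_{jk}+V_j\delta_{ik}+V_k\delta_{ij})^2$ to confirm $|S|^2 = \tfrac{3}{n+2}|V|^2$. Sharpness of the constant is automatic from the construction: equality holds precisely when $T = S$, i.e.\ when $\nabla h$ is a pure trace.
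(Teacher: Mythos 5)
Your proof is correct and is essentially the argument the paper relies on (it defers to Andrews--Baker, who use exactly this decomposition of the totally symmetric tensor $\nabla_i h_{jk\alpha}$ into its pure-trace part $S$ plus an orthogonal remainder, with $|S|^2=\langle T,S\rangle=\tfrac{3}{n+2}|\nabla H|^2$). The computations of the trace normalisation, of $|S|^2$, and of the orthogonality all check out, and summing over the normal index $\alpha$ correctly handles the high-codimension setting.
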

This is proven in \cite{Andrews2010} (as in Hamilton \cite{Hamilton1982} and Huisken \cite{Hu84}) and shows
that the gradient terms in \eqref{eqn_pinchpres} are strictly negative if $ c < \frac{3}{n+2}$. For $c < \frac{4}{3n}$ we also have $R_1-cR_2 < 0$ (see \cite{Andrews2010}), so by the maximum principle:

\begin{lemma}
Let $F:\mc M^n \times[0,T)\rightarrow \R^{n+k}$ be a solution to the mean curvature flow such that $\mc M_0$ satisfies 
\begin{align*}
|A|^2+a \leq  c |H|^2 
\end{align*}
for some $a > 0$ and $ c \leq \frac{4}{3 n}$. Then this condition is preserved by the mean curvature flow.  
\end{lemma}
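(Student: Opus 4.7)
The plan is to apply the parabolic maximum principle to the test function $\mathcal{Q} = |A|^2 + a - c|H|^2$, which by hypothesis satisfies $\mathcal{Q}(\cdot,0) \leq 0$. The goal is to show this sign is preserved along the flow. The evolution equation \eqref{eqn_pinchpres} already provides the scaffold
\[
\partial_t \mathcal{Q} \;=\; \Delta \mathcal{Q} \;-\; 2\bigl(|\nabla A|^2 - c|\nabla H|^2\bigr) \;+\; 2(R_1 - c R_2),
\]
so the task reduces to showing that the gradient and reaction terms both have the right sign under the pinching condition.

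For the gradient piece, a direct algebraic check gives $\frac{4}{3n} \leq \frac{3}{n+2}$ for all $n \geq 2$ (equivalently $4(n+2) \leq 9n$). Hence $c \leq \frac{4}{3n} \leq \frac{3}{n+2}$, and the Kato-type inequality \eqref{ineqn_Kato} yields
\[
|\nabla A|^2 - c|\nabla H|^2 \;\geq\; \left(\tfrac{3}{n+2} - c\right)|\nabla H|^2 \;\geq\; 0,
\]
so the gradient contribution in the evolution of $\mathcal{Q}$ is non-positive. For the reaction piece I would invoke the Andrews–Baker pointwise algebraic estimate referenced just before the statement: whenever $|A|^2 \leq c|H|^2$ holds at a point and $c \leq \frac{4}{3n}$, one has $R_1 - cR_2 \leq 0$ there.

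With these two inputs in hand, the maximum principle runs as follows. Suppose for contradiction that $t_0 \in (0,T)$ is the first time at which $\mathcal{Q}$ touches zero, attained at a point $p_0$. At $(p_0,t_0)$ one has $\nabla \mathcal{Q}=0$ and $\Delta \mathcal{Q} \leq 0$; moreover $|A|^2 = c|H|^2 - a < c|H|^2$ (using $a>0$), so the pinching is satisfied with slack, and the reaction estimate applies. Combining these we deduce $\partial_t \mathcal{Q}(p_0,t_0) \leq 0$, contradicting that $\mathcal{Q}$ was crossing zero from below. To make the argument rigorous in the borderline case $c = \frac{4}{3n}$, I would work with the perturbed quantity $\mathcal{Q}_\delta = \mathcal{Q} + \delta e^{Kt}$ for suitable $K,\delta > 0$ and let $\delta \to 0$ at the end.

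The main obstacle is really the Andrews–Baker reaction estimate $R_1 \leq c R_2$ for $c \leq \frac{4}{3n}$ under the pinching. In codimension one this reduces to a calculation with a single symmetric matrix, but in high codimension the second fundamental form is a family $\{h_\alpha\}$ of symmetric matrices indexed by normal directions which cannot in general be simultaneously diagonalized; the normal curvature term $|\Rp|^2$ appearing in $R_1$ is precisely the obstruction to commutativity. The sharp constant $\frac{4}{3n}$ arises from a careful decomposition choosing a normal frame adapted to the mean curvature direction and using Lagrange multipliers on the remaining components. Fortunately this analysis is carried out in \cite{Andrews2010} and can be cited directly, so the proof of the present lemma requires only assembling the evolution equation, the Kato inequality, and the Andrews–Baker algebraic bound into a single maximum principle argument.
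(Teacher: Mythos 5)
Your proposal is correct and matches the paper's argument: the paper likewise combines the evolution equation \eqref{eqn_pinchpres}, the Kato-type inequality \eqref{ineqn_Kato} (using $c\leq\frac{4}{3n}\leq\frac{3}{n+2}$) to kill the gradient terms, and the Andrews--Baker estimate $R_1-cR_2\leq 0$ from \cite{Andrews2010}, and then invokes the maximum principle. Your extra care at the touching point and the $\delta e^{Kt}$ perturbation for the borderline constant are just standard bookkeeping that the paper leaves implicit.
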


In fact we will require a slight modification of this Lemma. For the above reaction terms in the evolution of \eqref{eqn_pinchpres}, since $\mc Q \leq 0$ is preserved by the flow, we have 
\begin{align*}
2 R_1-2 c R_2  &\leq2 |A_1|^2 \mc Q-2 a |A_1|^2-\frac{2a}{n}\frac{1}{c-\nicefrac{1}{n}}|\circo A_-|^2 \\ 
&+\frac{2}{n}\frac{1}{c-\nicefrac{1}{n}}| A_-|^2 \mc Q+\left(6-\frac{2}{n (c-\nicefrac{1}{n})}  \right) |\circo A_1|^2 | \circo A_-|^2+\left(3-\frac{2}{n (c-\nicefrac{1}{n})}  \right)|\circo A_-|^4
\end{align*}
where $A_1$ is the second fundamental form in the mean curvature direction and $ \circo A_-$ represents (traceless) second fundamental form in the directions orthogonal to the mean curvature. Therefore we have the following Lemma, 
\begin{lemma}
Let $F:\mc M^n \times[0,T)\rightarrow \R^{n+k}$ be a solution to the mean curvature flow such that $\mc M_0$ satisfies 
\begin{align*}
\mc Q(x,0)=|A|^2(x,0)+a- c |H|^2(x,0) 
\end{align*}
for some $a > 0$ and $ c \leq \frac{4}{3 n}$. Then $\mc Q(x,t) \leq 0 $ and we have the following evolution inequality 
\begin{align}
\label{eqn_pinchpres2}
\nonumber\partial_t \mc Q & \leq  \Delta \mc Q-2 ( |\nabla A |^2-c |\nabla H|^2 )+2 |A_1|^2 \mc Q-2 a |A_1|^2-\frac{2a}{n} \frac{1}{c-\nicefrac{1}{n}}|\circo A_-|^2 \\ 
 &+\frac{2}{n}\frac{1}{c-\nicefrac{1}{n}}| A_-|^2 \mc Q+ \left(6-\frac{2}{n (c-\nicefrac{1}{n})}  \right) |\circo A_1|^2 | \circo A_-|^2+\left(3-\frac{2}{n (c-\nicefrac{1}{n})}  \right)|\circo A_-|^4\\
\nonumber &\leq 0.
\end{align}

\end{lemma}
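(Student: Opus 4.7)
The lemma contains two assertions: preservation $\mathcal{Q}(x,t)\le 0$, which is immediate from the previous Lemma, and the explicit evolution inequality \eqref{eqn_pinchpres2}. The content of \eqref{eqn_pinchpres2} reproduces the identity \eqref{eqn_pinchpres} verbatim apart from the reaction term, so the plan reduces to establishing the pointwise bound
\begin{equation*}
2R_1 - 2cR_2 \;\le\; 2|A_1|^2\mathcal{Q} - 2a|A_1|^2 - \tfrac{2a}{n(c-1/n)}|\circo A_-|^2 + \tfrac{2}{n(c-1/n)}|A_-|^2\mathcal{Q} + C_1|\circo A_1|^2|\circo A_-|^2 + C_2|\circo A_-|^4,
\end{equation*}
with $C_1 = 6 - 2/(n(c-1/n))$ and $C_2 = 3 - 2/(n(c-1/n))$, after which the evolution identity and the Kato inequality close the argument.

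To establish this bound I would proceed in two algebraic steps. First, choose a local orthonormal normal frame with $\nu_1 = H/|H|$ and split $h = h_1 + h_-$, noting that $h_-$ is automatically traceless since $H$ has no components orthogonal to $\nu_1$, so $|A_-|^2 = |\circo A_-|^2$. Expanding $R_1 = \sum_{\alpha,\beta}(\sum_{ij}h_{ij\alpha}h_{ij\beta})^2 + |\Rp|^2$ and $R_2 = |A_1|^2 |H|^2$ in this frame and regrouping as in Lemma 5.4 of \cite{Andrews2010}, one obtains an estimate of the shape
\begin{equation*}
2R_1 - 2cR_2 \le 2|A_1|^2(|A|^2 - c|H|^2) + \tfrac{2}{n(c-1/n)}|A_-|^2(|A|^2 - c|H|^2) + C_1|\circo A_1|^2|\circo A_-|^2 + C_2|\circo A_-|^4.
\end{equation*}
Second, using $|A|^2 - c|H|^2 = \mathcal{Q} - a$, each bracketed expression splits into a $\mathcal{Q}$-part and an $-a$-part, producing the four leading terms of the displayed inequality (the $-a$-part of the second bracket uses $|A_-|^2 = |\circo A_-|^2$). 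The quartic remainders already carry the stated coefficients $C_1$, $C_2$.

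Global nonpositivity of the right-hand side of \eqref{eqn_pinchpres2} then follows term by term. The gradient difference $-2(|\nabla A|^2 - c|\nabla H|^2)$ is nonpositive by the Kato inequality \eqref{ineqn_Kato}, since $c \le 4/(3n) < 3/(n+2)$. The hypothesis $c \le 4/(3n)$ gives $c - 1/n \le 1/(3n)$, so $2/(n(c - 1/n)) \ge 6$, making $C_1 \le 0$ and $C_2 \le -3 < 0$. The $\mathcal{Q}$-multiplied terms are nonpositive by preservation, and the $-a$ terms are nonpositive since $a > 0$; hence $\partial_t \mathcal{Q} \le \Delta \mathcal{Q}$, and the parabolic maximum principle confirms $\mathcal{Q} \le 0$ throughout $[0,T)$. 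The main obstacle is the bookkeeping in the first algebraic step: one must track the mean-curvature-direction quartic, the mixed quartic, and the normal curvature $|\Rp|^2$ simultaneously to verify that exactly $6 - 2/(n(c-1/n))$ and $3 - 2/(n(c-1/n))$ emerge, rather than merely nonpositive constants. This is essentially the Andrews-Baker adapted-frame computation rerun with the slack variable $a$ carried through, so the difficulty is algebraic rather than conceptual.
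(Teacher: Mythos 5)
Your proposal is correct and follows essentially the same route as the paper: preservation of $\mc Q\le 0$ is taken from the preceding lemma, the reaction terms $2R_1-2cR_2$ are bounded by rerunning the Andrews--Baker adapted-frame computation with the slack $a$ carried through via $|A|^2-c|H|^2=\mc Q-a$ (and $|A_-|^2=|\circo A_-|^2$), and nonpositivity follows from the Kato inequality together with $c-\nicefrac{1}{n}\le\frac{1}{3n}$ forcing $6-\frac{2}{n(c-\nicefrac{1}{n})}\le 0$ and $3-\frac{2}{n(c-\nicefrac{1}{n})}\le -3$. The paper itself only asserts the reaction-term bound without displaying the frame computation, so your outline is at the same (or slightly greater) level of detail.
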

In particular, the reaction terms satisfy $R_1-c R_2\leq  0 $ whenever $ \mc Q\leq 0$.     
As a consequence, we see that the flow preserves both $|H| >0$ and \eqref{eqn_pinching}. 

The following existence theorem holds for the mean curvature flow of $\mc M_0$ under the conditions of Theorem \ref{thm:mainThm}:
\begin{theorem}\label{thm:longTimeExistence}
The mean curvature flow of $\M_0$ exists on a finite maximal time interval $0 \leq t < T < \infty$.  Moreover, $\limsup_{t\rightarrow T}\abs{A}^2 = \infty$.
\end{theorem}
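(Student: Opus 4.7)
The plan is to handle the two assertions separately: $T<\infty$ via an extrinsic shrinking-ball argument, and the blow-up of $|A|^2$ via a standard parabolic bootstrap. Neither step will genuinely need the pinching condition \eqref{eqn_pinching}; both are general facts for any compact smooth immersion under mean curvature flow, so the pinching only enters indirectly through the preservation lemmas already proved above.

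For the finiteness of $T$, I would examine $f(x,t) = |F(x,t)|^2$ on the flow. Using $\partial_t F = H$, the identity $\Delta_{\M_t}F = H$ for the position map, and the fact that $\sum_i |\nabla_{e_i} F|^2 = n$ in any orthonormal frame on $\M_t$, one finds $\partial_t f = 2\langle F,H\rangle$ and $\Delta_{\M_t}f = 2n + 2\langle F,H\rangle$, hence
\begin{equation*}
(\partial_t - \Delta_{\M_t})f = -2n.
\end{equation*}
Applying the parabolic maximum principle (Hamilton's trick on the Lipschitz function $t \mapsto \max_{\M_t} f$) gives $\max_{\M_t} f \leq \max_{\M_0}|F_0|^2 - 2nt$. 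Since $f \geq 0$, the flow cannot persist beyond $t = \max_{\M_0}|F_0|^2/(2n)$, and therefore $T < \infty$.

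For the statement that $\limsup_{t \to T}|A|^2 = \infty$, I would argue by contradiction. Suppose $|A|^2 \leq C_0$ uniformly on $\M \times [0,T)$. Then $|H|^2 \leq n|A|^2$ is also bounded, so $\partial_t F = H$ makes $F(\cdot, t)$ uniformly Lipschitz in $t$ and a continuous limit $F_T := \lim_{t \to T} F(\cdot, t)$ exists. Standard interior estimates for the parabolic system \eqref{eqn:evolA}, obtained by differentiating the evolution equation and iterating the maximum principle, upgrade the bound on $|A|$ to uniform bounds on every $|\nabla^m A|$; combined with the evolution $\partial_t g_{ij} = -2\, H \cdot h_{ij}$, which keeps the induced metric equivalent to $g_{ij}(0)$ on $[0,T)$, this promotes the convergence to $C^\infty$ convergence, so $F_T$ is a smooth immersion of $\M$ into $\R^{n+k}$. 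Short-time existence for mean curvature flow starting from $F_T$ then yields a smooth extension of the flow to $[0, T+\delta)$, contradicting the maximality of $T$. Hence $\limsup_{t \to T}|A|^2 = \infty$.

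The main obstacle in writing this out carefully is the bootstrap from a uniform $|A|$-bound to uniform $C^\infty$-bounds on the immersion; however, this is entirely classical and parallels the codimension-one singularity theory, so the argument reduces to a bookkeeping exercise applied to the high-codimension evolution equations already collected in the preceding section.
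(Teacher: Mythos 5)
Your proposal is correct and follows essentially the same route as the paper: the finiteness of $T$ is obtained from $(\partial_t-\Delta)\abs{F}^2=-2n$ and the maximum principle exactly as in the text, and for the blow-up of $\abs{A}^2$ the paper simply cites Andrews--Baker, whose argument is the standard bootstrap-and-extend contradiction you sketch.
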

The proof that the maximal time of existence is finite follows easily from the evolution equation for the position vector $F$: $\frac{\partial}{\partial t}\abs{F}^2 = \Delta\abs{F}^2-2n$.  The maximum principle implies $\abs{F(p,t)}^2\leq R^2-2nt$ and thus $T\leq \frac{R^2}{2n}$, where $R=\max\left\{\abs{F_0(p)}:\ p\in\Sigma\right\}$. The proof of the second part of the theorem can be found in \cite{Andrews2010}.




\section{Gradient Estimates}
In this section, we prove a gradient estimate for the mean curvature flow. We prove this estimate directly from the quadratic curvature bound, $ |A| < c |H|^2, c \leq \frac{4}{3n}$ without needing the asymptotic cylindrical estimates. (In fact we will prove the cylindrical estimates as a consequence of the gradient estimates). The estimates derived here are pointwise gradient estimate, they depend only on the mean curvature (or equivalently the second fundamental form) at a point and not on the maximum of curvature as for more general parabolic type derivative estimates of \cite{Ecker1991},\cite{Andrews2010}. We have   
\begin{align*}
\frac{3}{n+2}-c> 0. 
\end{align*}
The inequality allows us to combine the derivative terms in the evolution equation of $ |A|^2-c |H|^2$ with the Kato-type inequality $ |\nabla A|^2 \geq \frac{3}{n+2}|\nabla H|^2 $. 
\begin{theorem}\label{thm_gradient}
Let $ \M_t , t \in [0,T)$ be a closed $n$-dimensional quadratically bounded solution to the mean curvature flow in the Euclidean space, $ \R^{n+k}$ with $n \geq 2$, that is 
\begin{align*}
|A|^2 - c|H|^2 <0, |H| >0  
\end{align*}
with $ c \leq \frac{4}{3n}$.
 Then there exists a constant $ \gamma_1= \gamma_1 (n, \mc M_0)$ and a constant $ \gamma_2 = \gamma_2 ( n , \M_0)$ such that the flow satisfies the uniform estimate 
\begin{align*}
|\nabla A|^2 \leq \gamma_1 |A|^4+\gamma_2
\end{align*}
for all $ t\in [0, T)$.
\end{theorem}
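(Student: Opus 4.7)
The plan is to apply the parabolic maximum principle to an auxiliary function that controls $|\nabla A|^2$ in terms of $|A|^4$, using as a reservoir of negativity the strictly signed pinching quantity $\mc Q = |A|^2 + a - c|H|^2 \leq 0$ from the preservation inequality \eqref{eqn_pinchpres2}. What makes this workable in high codimension is the strict inequality $\frac{3}{n+2} - c > 0$: together with the Kato-type inequality \eqref{ineqn_Kato} it produces a strictly negative gradient contribution of order $|\nabla A|^2$ in the evolution of $\mc Q$, which can be leveraged to beat the bad reaction in the evolution of $|\nabla A|^2$.

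\textbf{Argument.} Differentiating Simons' identity \eqref{eqn:SimonsId} and using \eqref{eqn:evolA} in the standard way produces a pointwise bound of the schematic form $(\p_t - \Delta)|\nabla A|^2 \leq -2|\nabla^2 A|^2 + C_1|A|^2|\nabla A|^2$. From \eqref{eqn_pinchpres2} combined with \eqref{ineqn_Kato}, and after discarding the nonpositive reaction terms,
\[(\p_t - \Delta)\mc Q \leq -2\e_0|\nabla A|^2, \qquad \e_0 := 1 - \tfrac{c(n+2)}{3} > 0.\]
Squaring and using $\mc Q \leq 0$ gives
\[(\p_t - \Delta)\mc Q^2 \geq 4|\mc Q|\e_0|\nabla A|^2 - 2|\nabla \mc Q|^2,\]
with $|\nabla \mc Q|^2 \leq C_2|A|^2|\nabla A|^2$ by Kato and the equivalence $|H|\sim|A|$ from the pinching. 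Consider then $f = |\nabla A|^2 - \gamma_1 \mc Q^2 - \gamma_2$. On a compact $\M_0$ satisfying the strict initial pinching there is a constant $\kappa > 0$ with $|\mc Q| \geq \kappa |A|^2$ once $|A|$ exceeds a fixed threshold, and combining the evolution equations yields
\[(\p_t - \Delta)f \leq \bigl(C_1 - 4\gamma_1 \kappa\e_0 + 2\gamma_1 C_2\bigr)|A|^2|\nabla A|^2 + (\text{lower order}).\]
Choosing $\gamma_1$ large enough (depending on $n$, $c$, and the pinching margin at $t=0$) makes the bracket nonpositive, and choosing $\gamma_2$ large enough absorbs both the initial value of $f$ and the bounded contribution from regions where $|A|$ is small. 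The parabolic maximum principle then forces $f \leq 0$ on $[0,T)$, and since $\mc Q^2 \leq C(|A|^4 + 1)$ by the very definition of $\mc Q$, the desired estimate follows after relabelling constants.

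\textbf{Main obstacle.} The delicate point is that the term $2\gamma_1|\nabla \mc Q|^2$ arising in $(\p_t - \Delta)\mc Q^2$ is \emph{positive} and of exactly the same order $|A|^2|\nabla A|^2$ as the bad reaction in the evolution of $|\nabla A|^2$, rather than something of genuinely lower order. Closing the inequality therefore requires $4\kappa\e_0$ to beat $2C_2$ after all constants have been tracked honestly, which may force one to retain and exploit the further nonpositive reaction terms of \eqref{eqn_pinchpres2} --- those involving $|A_1|^2$, $|\circo A_-|^2$, and $\mc Q|A_-|^2$ --- to supply the missing headroom. The quantitative separation $c \leq \tfrac{4}{3n} < \tfrac{3}{n+2}$, which bounds $\e_0$ below by a positive constant depending only on $n$, is precisely what makes this balance possible and is the reason the theorem breaks at the borderline $c = \tfrac{3}{n+2}$.
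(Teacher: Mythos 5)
Your overall strategy (a maximum-principle argument pairing $|\nabla A|^2$ against the square of the pinching quantity, with the Kato-type inequality and the strict gap $\frac{3}{n+2}-c>0$ supplying a good term proportional to $-|\nabla A|^2$ from the evolution of the pinching quantity) is the right one and is close in spirit to the paper's. But the specific ansatz $f = |\nabla A|^2 - \gamma_1 \mc Q^2 - \gamma_2$ does not close, and the obstacle you flag at the end is fatal rather than technical. In the evolution of $\mc Q^2$ the good term is $4|\mc Q|\e_0|\nabla A|^2 \geq 4\kappa\e_0|A|^2|\nabla A|^2$ and the bad term is $2|\nabla\mc Q|^2 \leq 2C_2|A|^2|\nabla A|^2$; both enter $(\p_t-\Delta)f$ multiplied by the same factor $\gamma_1$, so enlarging $\gamma_1$ changes nothing and you genuinely need the pointwise inequality $2\kappa\e_0 \geq C_2$. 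This is quantitatively false: since $|A|^2 \geq \frac{1}{n}|H|^2$ one has $|\mc Q| \leq (cn-1)|A|^2 \leq \frac{1}{3}|A|^2$, so $\kappa \leq \frac{1}{3}$ while $\e_0 < 1$, giving $2\kappa\e_0 < 1$; on the other hand $|\nabla \mc Q| \leq 2|A||\nabla A| + 2c|H||\nabla H|$ already forces $C_2 \geq 4$. The additional nonpositive reaction terms of \eqref{eqn_pinchpres2} cannot supply the missing headroom: they are of order $|A|^2$ or $|A|^4$ with no factor of $|\nabla A|^2$, so they cannot absorb a deficit of order $|A|^2|\nabla A|^2$ unless $|\nabla A|^2$ is already bounded by $|A|^2$, which is what you are trying to prove.

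The paper closes the argument by replacing the difference with the quotient $|\nabla A|^2/g^2$, where $g = c|H|^2 - |A|^2 \geq \e|A|^2 > 0$. This changes the structure in two decisive ways. First, the analogue of your $|\nabla\mc Q|^2$ contribution appears as $\frac{2}{z}\la\nabla z, \nabla(w/z)\ra$ plus a cross term $\la\nabla g,\nabla|\nabla A|^2\ra/g^2$: the former is a first-order term in the quantity being estimated and is harmless for the maximum principle (it vanishes at a spatial maximum), and the latter is absorbed into the Hessian term $-2|\nabla^2 A|^2/g$ via Kato's inequality $|\nabla|\nabla A|^2| \leq 2|\nabla^2 A||\nabla A|$ and Cauchy--Schwarz. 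Second, the good term coming from $(\p_t-\Delta)g \geq 2\kappa_n\frac{n+2}{3}|\nabla A|^2$ enters as $-2\kappa_n\frac{n+2}{3}|\nabla A|^4/g^2$, which is \emph{quadratic} in $|\nabla A|^2/g$, whereas the bad reaction $c_n|A|^2|\nabla A|^2/g$ is only linear in it (with $|A|^2 \leq g/\e$); hence once $|\nabla A|^2/g^2$ exceeds an explicit threshold the good term dominates automatically, and no comparison of universal constants is needed. To repair your proof you would have to reproduce one of these two mechanisms; as written, the required inequality cannot be made to hold.
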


\begin{proof}
We choose here $ \kappa_n = \left( \frac{3}{n+2}-c\right)>0$. As $\frac{1}{n}\leq c\leq \frac{4}{3n}$, $n\geq 2,\kappa_n$ is strictly positive. We will consider here the evolution equation for 
\begin{align*}
\frac{|\nabla A|^2}{g^2}
\end{align*}
where $ g = \alpha |H|^2-|A|^2-\beta $ where $ \alpha $ and $\beta$ are constants coming from the quadratic bounds on $|A|^2$ ensuring that $g$ is strictly positive. Since $ |A|^2-c|H|^2 < 0, |H|>0$ and $\M_0$ is compact, there exists an $ \eta(\M_0) >0, C_\eta(\M_0)>0$ so that 
\begin{align}\label{eqn_eta}
\left( c-\eta \right)|H|^2-|A|^2 \geq C_\eta>0.
\end{align} 
 Hence we set
 \begin{align*}
g = c|H|^2-|A|^2 >\e |A|^2>0
\end{align*}
where $ \e = \frac{\eta}{c}$. From the evolution equations,  \eqref{eqn_pinchpres2}, we get 
\begin{align*}
\partial_t g &= \Delta g-2 \left(  c|\nabla H|^2-|\nabla A|^2 \right)+2  \left( c   R_2-R_1 \right)\\
 &\geq \Delta g-2  \left(  \frac{n+2}{3}c-1 \right) |\nabla A|^2 \\
 & \geq \Delta g+2\kappa_n  \frac{n+2}{3}| \nabla A|^2.  
\end{align*}
The evolution equation for $ |\nabla A|^2 $ is given by 
\begin{align*}
 \frac \partial{\partial t}|\nabla A|^2-\Delta |\nabla A|^2 &\leq-2 |\nabla^2 A|^2+c_n |A|^2 |\nabla A|^2.
\end{align*}
Let $w,z$ satisfy the evolution equations
\begin{align*}
\frac{\partial}{\partial t}w = \Delta w+W , \quad \frac{\partial}{\partial t}z = \Delta z+Z 
\end{align*}
 then we find that 
 \begin{align*}
\partial_t \left(\frac{w}{z}\right) &= \Delta\left( \frac{w}{z}\right) +\frac{2}{z}\left \la \nabla \left( \frac{w}{z}\right) , \nabla z \right \ra+\frac{W}{z}-\frac{w}{z^2} Z\\
&= \Delta\left( \frac{w}{z}\right) +2\frac{\la \nabla w , \nabla z \ra}{z^2}-2 \frac{w|\nabla z |^2}{z^3}+\frac{W}{z}-\frac{w}{z^2} Z.
\end{align*}
Furthermore for any function $g$, we have by Kato's inequality
\begin{align*}
\la \nabla g , \nabla |\nabla A|^2 \ra &\leq 2 |\nabla g| |\nabla^2 A| |\nabla A| \\
&\leq \frac{1}{g}|\nabla g |^2 | \nabla A|^2+g |\nabla^2 A|^2.     
\end{align*}
We then get
\begin{align*}
-\frac{2}{g}| \nabla^2 A|^2+\frac{2}{g}\left \la \nabla g ,\nabla \left( \frac{|\nabla A|^2}{g}\right) \right\ra \leq-\frac{2}{g}| \nabla^2 A|^2-\frac{2}{g^3}|\nabla g|^2 |\nabla A|^2+\frac{2}{g^2}\la \nabla g ,\nabla |\nabla A|^2 \ra \leq 0 .  
\end{align*}
Then if we let $ w = |\nabla A|^2 $ and $ z = g$ with $W \leq-2 |\nabla^2 A|^2+c_n |A|^2 |\nabla A|^2$ and $Z\geq 2\kappa_n  \frac{n+2}{3}| \nabla A|^2 $ we get 
\begin{align*}
\frac{\partial}{\partial t}\left( \frac{|\nabla A|^2}{g}\right)-\Delta \left( \frac{|\nabla A|^2}{g}\right) &\leq \frac{2}{g}\left \la \nabla g ,\nabla \left( \frac{|\nabla A|^2}{g}\right) \right \ra+\frac{1}{g}(-2 |\nabla^2 A|^2+c_n |A|^2 |\nabla A|^2 ) \\
&-2 \kappa_n \frac{n+2}{3}\frac{|\nabla A|^4}{g^2} \\
& \leq c_n |A|^2  \frac{|\nabla A|^2}{g}-2 \kappa_n \frac{n+2}{3}\frac{|\nabla A|^4}{g^2}.
\end{align*}
We repeat the above computation with $w = \frac{|\nabla A|^2}{g}, z = g,$
\begin{align*}
W\leq c_n |A|^2  \frac{|\nabla A|^2}{g}-2 \kappa_n \frac{n+2}{3}\frac{|\nabla A|^4}{g^2}\end{align*} 
and $ Z\geq 0$ to get 
\begin{align*}
\frac{\partial}{\partial t}\left( \frac{|\nabla A|^2}{g^2}\right)-\Delta \left( \frac{|\nabla A|^2}{g^2}\right) &\leq \frac{2}{g}\left \la \nabla g ,\nabla \left( \frac{|\nabla A|^2}{g^2}\right) \right \ra  \\
&+\frac{1}{g}\left (   c_n |A|^2 \frac{|\nabla A|^2}{g}-2 \kappa_n \frac{n+2}{3}\frac{|\nabla A|^4}{g^2}\right)
\end{align*}
The nonlinearity then is
\begin{align*}
\frac{|\nabla A|^2}{g^2} \left( c_n-\frac{2 \kappa_n(n+2)}{3}\frac{|\nabla A|^2}{|A|^2 g}  \right)\leq\frac{|\nabla A|^2}{g^2} \left( c_n-\frac{2 \kappa_n(n+2) \e}{3}\frac{|\nabla A|^2}{g^2}  \right).
\end{align*}
By the maximum principle, there exists a constant (with $\eta$ chosen sufficiently small so that this estimate holds at the initial time) such that 
\begin{align*}
\frac{|\nabla A|^2}{g^2}\leq \frac{3 c_n}{2 \kappa_n (n+2) \e}.  
\end{align*}
Therefore we see that there exists a constant $C = \frac{ 3c_n }{2 \kappa_n (n+2)\e  }= C(n, \M_0) $ such that
\begin{align*}
\frac{|\nabla A|^2}{g^2}\leq C.
\end{align*} 
\end{proof}

We will also want to control the time derivative of curvature with constants with explicit dependence. In order to do so, we now derive quantitative estimates for the second derivative of curvature.   
\begin{theorem}
Let $ \mc M$ be a solution of the mean curvature flow then there exists constants $\gamma_3, \gamma_4$  depending only on the dimension and pinching constant so that 
\begin{align*}
|\nabla^2 A|^2 \leq \gamma_3 |A|^6+\gamma_4. 
\end{align*} 
\end{theorem}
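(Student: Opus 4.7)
The plan is to mirror the proof of Theorem \ref{thm_gradient} one order higher. Specifically, I would consider the scale-invariant quotient
\[
F := \frac{|\nabla^2 A|^2}{g^3},
\]
where $g = c|H|^2 - |A|^2$ as before, and show that $F$ is uniformly bounded by the maximum principle. Since pinching together with \eqref{eqn_eta} forces $\varepsilon |A|^2 \leq g \leq nc |A|^2$, such a bound translates directly into $|\nabla^2 A|^2 \leq \gamma_3 |A|^6 + \gamma_4$.

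The first step is to derive the evolution equation for $|\nabla^2 A|^2$ by commuting two covariant derivatives through \eqref{eqn:evolA} using Simons' identity and the Ricci identity. The standard schematic computation of Andrews--Baker type gives
\[
(\partial_t - \Delta)|\nabla^2 A|^2 \leq -2|\nabla^3 A|^2 + c_1|A|^2|\nabla^2 A|^2 + c_2|A||\nabla A|^2|\nabla^2 A|,
\]
with $c_1,c_2$ depending only on $n$. The dimensionally balanced Young inequality $|A||\nabla A|^2|\nabla^2 A| \leq \tfrac{1}{2}(|A|^2|\nabla^2 A|^2 + |\nabla A|^4)$ absorbs the cross term to yield
\[
(\partial_t - \Delta)|\nabla^2 A|^2 \leq -2|\nabla^3 A|^2 + c_3|A|^2|\nabla^2 A|^2 + c_3|\nabla A|^4.
\]

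Next I would iterate the quotient computation from the proof of Theorem \ref{thm_gradient} three times, dividing by $g$ successively. The Kato-type inequality $|\nabla|\nabla^2 A|^2|^2 \leq 4|\nabla^2 A|^2|\nabla^3 A|^2$ plays exactly the role it did previously: during the first division it absorbs the gradient cross-term into the negative $-2|\nabla^3 A|^2/g$ coming from $W/g$. The gradient cross-terms produced by the second and third divisions are simply carried along and vanish at an interior maximum of $F$. Combining these iterations with the pointwise bound $|\nabla g|^2 \leq c_4 |A|^2 |\nabla A|^2$, the evolution at a maximum of $F$ reduces to a schematic inequality of the form
\[
0 \leq c_3|A|^2 F + c_3\frac{|\nabla A|^4}{g^3} + c_5\frac{|\nabla g|^2}{g^2}F - 2\kappa_n(n+2)\frac{|\nabla A|^2}{g}F,
\]
where $\kappa_n = \tfrac{3}{n+2} - c > 0$ as in the proof of Theorem \ref{thm_gradient}.

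To close the argument I would substitute the pinching bound $|A|^2 \leq C_1 g$ and the gradient estimate $|\nabla A|^2 \leq \gamma_1 g^2$ established in Theorem \ref{thm_gradient}, which together imply $\frac{|\nabla A|^4}{g^3} \leq \gamma_1^2 g \leq \gamma_1^2 C_1 |A|^2$ and control every remaining positive reaction by a term of order $gF$. The hard part will be that the coercive term $-2\kappa_n(n+2)\frac{|\nabla A|^2}{g}F$ only dominates when $\frac{|\nabla A|^2}{g^2}$ is close to its upper threshold from Theorem \ref{thm_gradient}. I would handle this by working instead with the perturbed quantity
\[
\tilde F := F - K\,\frac{|\nabla A|^2}{g^2}
\]
for a sufficiently large constant $K$; because $\frac{|\nabla A|^2}{g^2}$ is uniformly bounded by Theorem \ref{thm_gradient}, any bound on $\tilde F$ yields one on $F$. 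At a maximum of $\tilde F$, the extra contribution from the evolution of $\frac{|\nabla A|^2}{g^2}$ (computed in the proof of Theorem \ref{thm_gradient}) either forces $\frac{|\nabla A|^2}{g^2}$ close to its critical threshold—activating the coercive term—or closes the inequality directly, giving the required uniform bound and hence $|\nabla^2 A|^2 \leq \gamma_3|A|^6 + \gamma_4$.
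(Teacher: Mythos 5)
Your overall strategy -- derive the schematic evolution inequality for $|\nabla^2 A|^2$, absorb the cross term by Young's inequality, and bound a Bernstein-type combination of quotients by the maximum principle -- is the same as the paper's, which works with $\frac{|\nabla^2 A|^2}{|H|^5}+N\frac{|\nabla A|^2}{|H|^3}+M\frac{|\nabla A|^2}{|H|^7}-\kappa\sqrt{c|H|^2-|A|^2}$ rather than with powers of $g$. However, the mechanism you propose for closing the maximum principle has a genuine flaw, and it is precisely at the point you yourself flag as ``the hard part.''

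First, as you observe, the term $-2\kappa_n\frac{n+2}{3}\frac{|\nabla A|^2}{g}F$ coming from $Z\geq 2\kappa_n\frac{n+2}{3}|\nabla A|^2$ cannot serve as the coercive term here: unlike in Theorem \ref{thm_gradient}, where the corresponding term is quadratic in the quantity being estimated, here it is merely linear in $F$ with a coefficient $\frac{|\nabla A|^2}{g}$ that has no positive lower bound (it vanishes wherever $\nabla A=0$ while $\nabla^2 A\neq 0$). Second, and more seriously, the proposed fix $\tilde F = F - K\frac{|\nabla A|^2}{g^2}$ has the wrong sign. To bound $\partial_t\tilde F-\Delta\tilde F$ from above you need a \emph{lower} bound on the heat operator applied to $\frac{|\nabla A|^2}{g^2}$, and its unavoidable negative contribution $-\frac{2|\nabla^2 A|^2}{g^2}$ (from the Bochner term $-2|\nabla^2 A|^2$ in the evolution of $|\nabla A|^2$) then reappears as $+\frac{2K|\nabla^2 A|^2}{g^2}=+2KgF$, i.e.\ a large positive multiple of the very quantity you are trying to control. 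The correct move is the opposite one: \emph{add} a large multiple of the first-order quantity, so that the good term $-2|\nabla^2 A|^2$ in the evolution of $|\nabla A|^2$ supplies a coercive contribution $-2N\frac{|\nabla^2 A|^2}{g^2}$ (in the paper, $-N\frac{|\nabla^2 A|^2}{|H|^3}$ and $-M\frac{|\nabla^2 A|^2}{|H|^7}$) which absorbs the bad reaction term $c_3|A|^2\frac{|\nabla^2 A|^2}{g^3}\leq \frac{c_3}{\e}\frac{|\nabla^2 A|^2}{g^2}$ once $N$ is large. The remaining source terms of order $|H|^3$ are then killed in the paper by subtracting $\kappa\sqrt{c|H|^2-|A|^2}$, whose evolution produces $-\kappa\e_0|H|^3$, leading to $\partial_t f-\Delta f\leq C_5$ and a bound that grows at most linearly on the finite existence interval. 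If you flip the sign of your perturbation and add such an auxiliary term (or settle, as the paper does, for the non-scale-invariant conclusion with the additive constant $\gamma_4$), your argument can be repaired along exactly these lines.
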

\begin{proof}
We have the following evolution equation
\begin{align*}
\partial_t |\nabla^2 A|^2-\triangle |\nabla^2 A|^2 &\leq-2 |\nabla^3 A|^2+k_1 |A|^2 |\nabla^2 A|^2+k_2 |A| |\nabla A|^2 | \nabla^2 A|\\
&\leq-2 |\nabla^3 A|^2+\left( k_1+\frac{k_{2}}{2}\right) |A|^2 | \nabla^2 A|^2+\frac{k_2}{2}|\nabla A|^4.
\end{align*}
We now consider the evolution equation of the term $ \frac{| \nabla^2  A |^2}{|H|^5}$. 
Firstly we see that 
\begin{align*}
\partial_t |H|^\alpha \geq \triangle |H|^\alpha-\alpha(\alpha-1) |H|^{\alpha-2}|\nabla H|^2. 
\end{align*}
Therefore we get 
\begin{align*}
\partial_t \frac{|\nabla^2 A|^2}{|H|^5}-\triangle \frac{| \nabla^2 A|^2}{|H|^5}&\leq \frac{1}{|H|^5}\left (-2 |\nabla^3 A|^2+\left (k_1+\frac{k_2}{2}\right )|A|^2 |\nabla^2 A|^2+\frac{k_2}{2}|\nabla A|^4   \right)\\
&+\frac{20|H|^3 |\nabla^2 A|^2 |\nabla H|^2} {|H|^{10}}+\frac{2}{|H|^{10}}\la \nabla |H|^5 , \nabla |\nabla^2 A|^2 \ra-\frac{2 |\nabla^2 A|^2}{|H|^{15}}| \nabla |H|^5 |^2.  
\end{align*}
We have the terms 
\begin{align*}
 \frac{20|H|^3 |\nabla^2 A|^2 |\nabla H|^2} {|H|^{10}}-\frac{2 |\nabla^2 A|^2}{|H|^{15}}| \nabla |H|^5 |^2  &\leq \frac{20 |\nabla^2 A |^2 |\nabla H|^2}{|H|^7}-\frac{50 |\nabla^2  A|^2 |\nabla |H| |^2}{| H|^7}\\
& \leq   \frac{20 |\nabla^2 A |^2 |\nabla H|^2}{|H|^7}.
\end{align*}
and 
\begin{align*}
\frac{2}{|H|^{10}}\la \nabla |H|^5 , \nabla |\nabla^2 A|^2 \ra &= \frac{10 \la \nabla |H|, \nabla |\nabla^2 A|^2 \ra }{|H|^6}\\
& \leq \frac{1}{|H|^5} |\nabla^3 A|^2 +\frac{100|\nabla H|^2 | \nabla^2 A|^2}{|H|^7}   
\end{align*}
Together with the gradient estimate, this gives the following evolution equation
\begin{align*}
\partial_t \frac{|\nabla^2 A|^2}{|H|^5}-\triangle \frac{| \nabla^2 A|^2}{|H|^5}&\leq-\frac{|\nabla^3 A|^2}{|H|^5}+k_3 \frac{|\nabla^2 A|^2}{|H|^4}+\frac{120 |\nabla H|^2 |\nabla^2 A|^2}{|H|^7}+\frac{k_2}{2}|\nabla A|^4   \\
&\leq-\frac{ \nabla^3 A|^2}{|H|^5}+k_4 \frac{| \nabla^2 A|^2}{|H|^3}+C_1\frac{|\nabla^2 A|^2}{|H|^7}+\frac{k_5 |H|^8+C_2}{|H|^5}   
\end{align*}
Similar computations give us 
\begin{align*}
\partial_t \frac{|\nabla A|^2}{|H|^3}-\triangle \frac{|\nabla A|^2}{|H|^3}&\leq-\frac{|\nabla^2 A|^2}{|H|^3}+\frac{k_6 |H|^8+C_3}{|H|^5}, \\
 \partial_t \frac{|\nabla A|^2}{|H|^7}-\triangle \frac{|\nabla A|^2}{|H|^7}& \leq-\frac{|\nabla^2 A|^2}{|H|^7}+\frac{k_7 |H|^8+C_4}{| H|^9}.  
\end{align*}
We now set 
\begin{align*}
f = \frac{|\nabla^2 A|^2}{|H|^5}+N \frac{|\nabla A|^2}{|H|^3}+M \frac{|\nabla A|^2}{|H|^7}-\kappa \sqrt{ c|H|-|A|^2} 
\end{align*}
We have 
\begin{align*}
\partial_t f-\triangle f &\leq k_4 \frac{|\nabla^2 A|^2}{|H|^3}+k_5 |H|^3+C_1 \frac{|\nabla^2 A|^2}{|H|^7}+\frac{C_2}{|H|^5}\\
&-N \frac{|\nabla^2 A|^2}{|H|^3}+N k_6 |H|^3+\frac{N C_3}{|H|^5} \\
&-\frac{M|\nabla^2 A|^2}{|H|^7}+\frac{k_7 M}{|H|}+\frac{C_4 M}{|H|}-\kappa \e_0|H|^3.  
\end{align*}
Therefore we choose 
\begin{align*}
N>k_4, \quad \e_0 \kappa>  Nk_6+k_5 , \quad M > C_1, 
\end{align*}
Then we find, recalling that $ |H| \geq \alpha_1$ that
\begin{align*}
\partial_t f-\triangle f \leq C_5 
\end{align*}
which implies that
 \begin{align*}
\max_{\mc M_t }f \leq \max_{\mc M_{t_0}}f+C_5 (t-t_0)
\end{align*}
which gives the desired estimate.
\end{proof}

As a special case of our estimates we get the following statement. This corollary will be used extensively in the analysis of regions of high curvature. Note that a similar estimate plays an important role in the Ricci flow with surgery, \cite[Equation (1.3)]{Perelman2003}. That estimate is obtained via a compactness argument and is qualitative in nature. 
\begin{corollary}
Let $ \mc M_t$ be a mean curvature flow. Then there exists $c^\#, H^\#>0$ such that for all $p \in \mc M $ and $t>0$ which satisfy
\begin{align*}
|H(p,t)| \geq H^\# \implies |\nabla H(p,t)| \leq c^\# |H(p,t)|^2, \quad |\partial_t H(p,t)|\leq c^\# |H(p,t)|^3.
\end{align*}
\end{corollary}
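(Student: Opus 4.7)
The plan is to assemble the two derivative estimates already proved in this section together with the Kato-type inequality~\eqref{ineqn_Kato}, the evolution equation for $H$, and the quadratic pinching $|A|^2\leq c|H|^2$. The point is that once the dimensionally incorrect additive constants $\gamma_2,\gamma_4$ are absorbed into the leading powers of $|H|$ by taking a sufficiently large threshold $H^\#$, the scale-invariant bounds $|\nabla H|\leq c^\#|H|^2$ and $|\partial_t H|\leq c^\#|H|^3$ drop out.

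For the spatial gradient, the first step is to combine Kato's inequality $|\nabla H|^2\leq \tfrac{n+2}{3}|\nabla A|^2$ with Theorem~\ref{thm_gradient} and then apply pinching to obtain
\[
|\nabla H|^2 \;\leq\; \tfrac{n+2}{3}\bigl(\gamma_1|A|^4+\gamma_2\bigr) \;\leq\; \tfrac{n+2}{3}\bigl(\gamma_1 c^2|H|^4+\gamma_2\bigr).
\]
Choosing $H^\#$ so that $\gamma_2\leq \gamma_1 c^2 (H^\#)^4$ forces the additive $\gamma_2$ to be dominated whenever $|H|\geq H^\#$, and the estimate $|\nabla H|\leq c^\#|H|^2$ follows with $c^\#$ depending only on $n$, $c$ and $\gamma_1$.

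For the time derivative I would start from the evolution equation $\nabla_{\partial_t}H = \Delta H+H\cdot h_{pq}h_{pq}$. The reaction term is immediately controlled by $|H||A|^2\leq c|H|^3$. Since $\Delta H$ is a contraction of $\nabla^2 h$, one has $|\Delta H|\leq n|\nabla^2 A|$, so combining with the second-derivative estimate $|\nabla^2 A|^2\leq \gamma_3|A|^6+\gamma_4$ and pinching gives
\[
|\Delta H| \;\leq\; n\sqrt{\gamma_3|A|^6+\gamma_4} \;\leq\; n\sqrt{\gamma_3 c^3|H|^6+\gamma_4}.
\]
Enlarging $H^\#$ further so that $\gamma_4\leq \gamma_3 c^3 (H^\#)^6$ absorbs the remaining constant, and the two contributions together yield $|\partial_t H|\leq c^\#|H|^3$ on the region $\{|H|\geq H^\#\}$.

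There is no genuine obstacle here: the analytic content already sits in Theorem~\ref{thm_gradient} and the second-derivative estimate, and the corollary is the routine observation that outside a large-curvature set the dimensionally incorrect additive constants can be swallowed into the scale-invariant leading terms. The only care needed is that two threshold values appear in the two arguments, and one simply takes $H^\#$ to be the larger of them.
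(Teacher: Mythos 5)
Your proposal is correct and is exactly the argument the paper intends: the corollary is stated as ``a special case of our estimates,'' i.e.\ one combines the Kato-type inequality with Theorem~\ref{thm_gradient}, the second-derivative estimate, the evolution equation for $H$, and the pinching $|A|^2\leq c|H|^2$, then chooses $H^\#$ large enough to absorb the additive constants $\gamma_2,\gamma_4$ into the scale-invariant leading terms. No discrepancy with the paper's (implicit) proof.
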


Note that the following Lemma is purely a statement concerning submanifolds subject to gradient estimate for the mean curvature and is not concerned with mean curvature flow.
\begin{lemma}\label{lem_localHarnack}
Let $ F: \mc M^n \rightarrow \R^{n+k}$ be an immersed submanifold. Suppose there exists $ c^\#, H^\# $ such that
\begin{align*}
|\nabla H(p)|^2 \leq c^{\#}|H(p)|^2
\end{align*}
for any $ p\in \mc M $ such that $ |H|(p) \geq H^\#$. Let $ p_0\in \mc M$ satisfy $ |H(p_0)| \geq \gamma H^{\#}$ for some $ \gamma >1 $. Then
\begin{align*}
|H(q)| \geq \frac{|H(p_0)|}{1+c^{\#}d ( p_0, q)}\geq \frac{|H(p_0)|}{\gamma},
\end{align*}
$ \quad \forall q \mid d(p_0,q) \leq \frac{\gamma-1}{c^{\#}}\frac{1}{|H|(p_0) }$.
\end{lemma}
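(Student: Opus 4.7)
The plan is to integrate the pointwise gradient bound along a minimizing geodesic in $\M$. Let $\sigma:[0,d]\to\M$ be a unit-speed minimizing geodesic from $p_0$ to $q$, where $d:=d(p_0,q)$, and set $f(s):=|H(\sigma(s))|$. Along $\sigma$ the chain rule gives $|f'(s)|\leq|\nabla H(\sigma(s))|$, so at points where $f(s)\geq H^{\#}$ one obtains the scalar differential inequality $|f'(s)|\leq c^{\#}f(s)^{2}$, which can be solved explicitly.

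First I would introduce $[0,s_{0}]$, the maximal subinterval of $[0,d]$ starting at $0$ on which $f\geq H^{\#}$. This interval is nonempty since $f(0)=|H(p_0)|\geq\gamma H^{\#}>H^{\#}$, and it is closed by continuity of $f$. On $[0,s_{0}]$ the differential inequality rearranges as $(1/f)'\leq c^{\#}$, and integrating from $0$ to $s$ and inverting gives the Harnack-type lower bound
\[
f(s)\geq\frac{f(0)}{1+c^{\#}f(0)\,s}
\]
throughout $[0,s_{0}]$.

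Next I would close the bootstrap. Suppose for contradiction that $s_{0}<d$; then $f(s_{0})=H^{\#}$ by maximality. The distance hypothesis $d\leq(\gamma-1)/(c^{\#}|H(p_0)|)$ combined with $s_{0}<d$ yields the strict inequality $c^{\#}f(0)s_{0}<\gamma-1$, so the Harnack lower bound at $s_{0}$ gives $f(s_{0})>f(0)/\gamma\geq H^{\#}$, a contradiction. Hence $s_{0}=d$, and specializing the Harnack estimate to $s=d$ produces the stated inequality.

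The only genuinely delicate point is this open-closed bootstrap ensuring that $f$ remains above the threshold $H^{\#}$ along the entire minimizing geodesic; once that is in hand, everything reduces to an elementary one-variable ODE comparison. A minor bookkeeping remark is that one should work with the intrinsic distance and a minimizing geodesic in $\M$, which exist for the short distances under consideration in any smoothly immersed submanifold.
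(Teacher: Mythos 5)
Your proof is correct and follows essentially the same route as the paper: integrate the scalar differential inequality $\frac{d}{ds}|H(\zeta(s))|\geq -c^{\#}|H(\zeta(s))|^{2}$ along a minimizing geodesic, with an open--closed bootstrap to guarantee $|H|$ stays above the threshold where the gradient hypothesis applies (the paper phrases this via a ``closest point where $|H|$ drops below $|H(p_0)|/\gamma$'' contradiction, but the mechanism is identical). Note only that the displayed conclusion in the lemma should read $|H(q)|\geq |H(p_0)|/(1+c^{\#}|H(p_0)|\,d(p_0,q))$, as your integration (and the paper's own proof) actually yields.
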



\begin{proof}
We firstly consider the case where we have points $ \bar q \in \mc M$ such that $ | H(\bar q)| < \frac{|H(p_0)|}{\gamma}$. Let $q_0$ be a point with minimal distance from $ p_0$. Furthermore let $ d_0 = d ( p_0, q_0)|H(p_0)|$ and $\theta_0 = \min \left\{d_0, \frac{\gamma-1}{c^\#} \right\}$ (later we will show that $ d_0 \geq  \frac{\gamma-1}{c^\#}$). 

Let $ q \in \mc M$ be any point with $d ( q, p_0) \leq \frac{\theta_0}{| H(p_0)|}$. Then consider a minimising geodesic $ \zeta: [0, d(p_0, q)]\rightarrow \mc M$ parameterised by arclength. By the definition of $ \theta_0$ we see that 
\begin{align*}
d(p, \zeta(s)) < d(p,q) \leq \frac{\theta_0}{| H(p_0)|}\leq \frac{d_0}{| H(p_0)|}= d (p_0,q_0)
\end{align*}
which implies that 
\begin{align*}
|H(\zeta(s) )| \geq \frac{| H(p_0) |}{\gamma}\geq H^{\#}\quad \forall s \in [ 0, d ( p_0, q) ].
\end{align*}
We apply the gradient estimate to get 
\begin{align*}
|\nabla H(\zeta(s) ) | \leq c^{\#}|H ( \zeta(s) )|^2 .
\end{align*}
Differentiating this expression we get 
\begin{align*}
\frac{d}{ds}| H( \zeta(s)) | \geq-c^{\#}|H ( \zeta(s) )|^2 \quad   \forall s \in [ 0, d ( p_0, q) ].
\end{align*}
Integrating this differential inequality, we get 
\begin{align*}
|H(\zeta(s) )| \geq \frac{| H(p_0) |}{1+c^{\#}s |H(p_0) |}\quad  \forall s \in [ 0, d ( p_0, q) ].
\end{align*}
This implies that 
\begin{align*}
|H(q)| \geq \frac{|H(p_0)|}{1+c^{\#}d ( p_0, q) |H(p_0)|}
\end{align*}
as 
\begin{align*}
1+c^{\#}d (p_0,q)|H(p_0)| \leq 1+c^{\#}d_0 < 1+c^{\#}\theta_0
\end{align*}
which holds if $ d(p_0, q ) \leq \frac{\theta_0}{|H(p_0)|}$. Now suppose that $ d_0 < \frac{\gamma-1}{c^{\#}}$ so that $\theta_0 = d_0$. But then 
\begin{align*}
d_0 = d(p_0, q_0) | H(p_0)| 
\end{align*}
so that 
\begin{align*}
d(p_0,q_0) = \frac{\theta_0}{|H(p_0)|}.
\end{align*}
But we have 
\begin{align*}
d_0< \frac{\gamma-1}{c^{\#}}\implies 1+c^{\#}d_0 < \gamma \implies \frac{1}{\gamma}< \frac{1}{1+c^{\#}d_0} 
\end{align*}
which implies that 
\begin{align*}
|H(q_0)| \geq \frac{|H ( p_0 )|}{1+c^{\#}d_0}> \frac{| H(p_0)|}{\gamma}
\end{align*}
which is a contradiction. Hence $ d_0 \geq \frac{\gamma-1}{c^{\#}}$ and so $\theta_0 = \frac{\gamma-1}{c^{\#}}$. 

If instead we have that $| H(\bar q ) | \geq \frac{| H(p_0) |}{\gamma} \ \forall \bar q \in \mc M$ then
\begin{align*}
|\nabla H| \leq c^{\#}|H|^2
\end{align*}
everywhere on $ \mc M$. Hence the same argument above applies as well. 
\end{proof}

We  denote by $ g(t)$ the metric induced on $ \mc M \subset \R^{n+k}$ at time $t$. Given $p\in \mc M$ and $ r >0$, we let $ B_{g(t)}(p,r)\subset \mc M$ the closed ball of radius $r$ about $p$ with respect to the metric $g(t)$. In addition if $ t, \theta$ are given such that $0\leq t-\theta < t \leq T_0$, we set
\begin{align*}
\mc P (p,t,r,\theta) = \left \{(q,s) \mid q \in B_{g(s)}(p,r), s \in [t-\theta,t] \right\}.
\end{align*}
Such a set is called a (backward) parabolic neighbourhood of $ (p,t)$. Given a point $ (p,t)$, we set
\begin{align*}
\hat{r}(p,t) \equiv \frac{n-1}{|H (p,t)|}, \quad \hat{\mc P}(p,t,L,\theta) \equiv  \mc P ( p, t ,\hat r(p,t)L, \hat r(p,t)^2 \theta ).    
\end{align*}
It is well known that estimates such as above give local Harnack estimates, that is local control on the size of the curvature in a neighbourhood of a given point. 
\begin{lemma}
Let $ c^{\#}, H^{\#}$ be constants of Lemma \ref{lem_localHarnack}. We define $ d^{\#}= \frac{1}{8(n-1)^2 c^{\#}}$. If $ (p,t)$ satisfies $ |H|(p,t) \geq H^{\#}$ then given any $r, \theta \in (0, d^\#]$ then
\begin{align*}
\frac{|H(p,t)|}{2}\leq |H(q,s)| \leq 2 |H(p,t)| 
\end{align*}
for all $ (q,s) \in \hat P (p,t,r,\theta)$. 
\end{lemma}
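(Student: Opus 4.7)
The plan is to split the desired Harnack-type comparison into a spatial part at fixed time and a temporal part at the fixed basepoint, and to drive both using the two pointwise bounds $|\nabla H|\le c^{\#}|H|^{2}$ and $|\partial_t H|\le c^{\#}|H|^{3}$ supplied by the preceding corollary, which are valid wherever $|H|\ge H^{\#}$.

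For the temporal part, $|\partial_t H|\le c^{\#}|H|^{3}$ implies $|\partial_t |H|^{-2}|\le 2c^{\#}$. Using $\hat r(p,t)=(n-1)/|H(p,t)|$ and $\theta\le d^{\#}=1/(8(n-1)^{2}c^{\#})$, integration over $[t-\theta\hat r(p,t)^{2},t]$ yields
\[
\bigl||H(p,s)|^{-2}-|H(p,t)|^{-2}\bigr|\le 2c^{\#}\theta\hat r(p,t)^{2}\le \tfrac14|H(p,t)|^{-2},
\]
so that $|H(p,s)|^{-1}/|H(p,t)|^{-1}\in[\tfrac{\sqrt 3}{2},\tfrac{\sqrt 5}{2}]$ throughout the time interval.

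For the spatial part, $|\nabla H|\le c^{\#}|H|^{2}$ gives $|\nabla |H|^{-1}|\le c^{\#}$, and integration along a minimising $g(s)$-geodesic from $p$ to $q\in B_{g(s)}(p,r\hat r(p,t))$ with $r\le d^{\#}$, in the style of the proof of Lemma \ref{lem_localHarnack}, produces
\[
\bigl||H(q,s)|^{-1}-|H(p,s)|^{-1}\bigr|\le c^{\#}d_{g(s)}(p,q)\le \frac{1}{8(n-1)|H(p,t)|}\le\frac{1}{8|H(p,t)|}.
\]
Chaining with the temporal bound, $|H(q,s)|^{-1}$ is trapped between $\bigl(\tfrac{\sqrt 3}{2}-\tfrac18\bigr)|H(p,t)|^{-1}$ and $\bigl(\tfrac{\sqrt 5}{2}+\tfrac18\bigr)|H(p,t)|^{-1}$, both of which lie inside $[\tfrac12|H(p,t)|^{-1},\,2|H(p,t)|^{-1}]$. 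This is precisely the claimed two-sided bound.

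The one subtle point, and what I expect to be the main obstacle, is that the pointwise estimates from the corollary require $|H|\ge H^{\#}$, so one must verify that $|H|$ does not dip below $H^{\#}$ anywhere in the parabolic neighbourhood $\hat{\mc P}(p,t,r,\theta)$. I would close this with a standard open–closed continuity argument: starting from $(p,t)$, propagate the two-sided bound on the maximal connected subset of $\hat{\mc P}(p,t,r,\theta)$ on which it holds; the a priori conclusion $|H|\ge |H(p,t)|/2$ produced by the spatial and temporal estimates is strictly bounded away from $H^{\#}$ (assuming, without loss of generality, that the constant $H^{\#}$ from the corollary has been doubled in the hypothesis — a harmless replacement that does not alter $d^{\#}$). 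Thus the two-sided comparison cannot first fail in the interior, and so propagates to all of $\hat{\mc P}(p,t,r,\theta)$, which completes the proof.
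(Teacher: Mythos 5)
Your argument is correct, and it supplies a proof where the paper gives none: the text merely asserts that ``estimates such as above give local Harnack estimates'' and states the lemma without proof. Your decomposition into a temporal comparison along $\{p\}\times[t-\hat r^2\theta,t]$ (integrating $|\partial_t|H|^{-2}|\le 2c^{\#}$) followed by a spatial comparison at each fixed time slice (integrating $|\nabla|H|^{-1}|\le c^{\#}$ along minimising geodesics) is the natural parabolic extension of the paper's own proof of Lemma \ref{lem_localHarnack}, and the arithmetic closes with room to spare: $\bigl(\tfrac{\sqrt3}{2}-\tfrac18\bigr)^{-1}<2$ and $\bigl(\tfrac{\sqrt5}{2}+\tfrac18\bigr)^{-1}>\tfrac12$, exactly as you say.

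The one point worth emphasising is the issue you flag at the end: as literally stated, with the \emph{same} threshold $H^{\#}$ as in the corollary, the lemma does not quite follow, since for $|H(p,t)|$ barely above $H^{\#}$ the a priori bound only keeps $|H|$ above roughly $0.8\,|H(p,t)|$, which may fall below $H^{\#}$, at which point the derivative estimates become unavailable and the continuity argument cannot close. Your fix --- requiring $|H(p,t)|\ge 2H^{\#}$ (equivalently, enlarging the constant $H^{\#}$ appearing in the lemma), which leaves $d^{\#}$ and all subsequent applications unaffected since these lemmas are only ever invoked in regions of unboundedly large curvature --- is the standard and correct repair, and with it the open--closed propagation argument over the connected set $\hat{\mathcal P}(p,t,r,\theta)$ goes through because the derived two-sided bounds are strictly stronger than the asserted ones. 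So the proposal is complete; just state the enlarged threshold explicitly rather than parenthetically.
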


\section{Cylindrical Estimates}
In the analysis of singularities of mean curvature flow, a crucial step are convexity estimates. For the mean convex mean curvature flow of hypersurfaces, it says that points of large curvature have almost positive definite fundamental form. For the three dimensional Ricci flow, there is a similar estimate known as the Hamilton-Ivey pinching estimate. Huisken-Sinestrari prove this estimate by a complex iteration scheme using the symmetric polynomials of second fundamental form with $L^p$ estimates, the Michael-Simon Sobolev inequality. For high codimension, we do not have convexity. Rather, we prove that a singularity which is not spherical must be cylindrical. We prove this by using the gradient estimate. This is in contrast to \cite{HuSi09}, where the cylindrical estimates are derived as consequence of the convexity estimates of \cite{HuSi99b}. In this section, we will be interested in cylindrical singularities of form $ \mbb{S}^{n-1} \times \mbb{S}^1 \subset \R^{n+k}$. Therefore we will consider pinching of the form
\begin{align*}
|A|^2-c _n |H|^2 < 0, n \geq 5  
\end{align*}
  where $ c _n = \min\left \{\frac{4}{3n}, \frac{1}{n-2} \right\}$. 
\begin{theorem}[\cite{HuSi09}]\label{thm_cylindrical}
Let $ F:[0,T)\times \mc M^n \rightarrow \R^{n+k}$ be a smooth solution to the mean curvature flow such that $ F_0(p) = F(0,p)$ is compact and quadratically bounded. Then for all $ \e >0$ there is a $H_0>0$ such that if $ |H(p,t)| \geq H_0$ then 
\begin{align*}
\frac{|A|^{2}}{|H|^2}\leq \left (\frac{1}{n-1}+\e  \right)
\end{align*} 
\end{theorem}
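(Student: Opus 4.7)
The plan is to argue by contradiction using a blow-up and a strong parabolic maximum principle, in the spirit of White's treatment of the asymptotic convexity estimate, with Theorem~\ref{thm_gradient} taking the role that convexity plays in the hypersurface setting. Suppose the theorem fails: for some $\e_0>0$ there is a sequence $(p_k,t_k)$ with $\lambda_k:=|H(p_k,t_k)|\to\infty$ and $|A|^2/|H|^2(p_k,t_k)\geq 1/(n-1)+\e_0$. Rescale parabolically by $\lambda_k$ about $(p_k,t_k)$: $\tilde F_k(p,\tau):=\lambda_k\bigl(F(p,t_k+\tau/\lambda_k^2)-F(p_k,t_k)\bigr)$. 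The local Harnack estimate at the end of Section~3 keeps $|\tilde H|$ between $\tfrac12$ and $2$ on a fixed backward parabolic cylinder; the pinching $|\tilde A|^2\leq c_n|\tilde H|^2$ bounds $\tilde A$; Theorem~\ref{thm_gradient} together with the second derivative estimate, followed by standard parabolic bootstrapping, yields uniform $C^\infty$ bounds on $\tilde A$ on that cylinder. Passing to a subsequence, Arzel\`a--Ascoli then produces a smooth limit flow $\tilde{\mc M}_\infty$ on a parabolic neighbourhood $\mc U$ of a basepoint $(\tilde p_\infty,0)$, satisfying $|\tilde A|^2-c_n|\tilde H|^2\leq 0$ on $\mc U$, $|\tilde H|(\tilde p_\infty,0)=1$, and $|\tilde A|^2/|\tilde H|^2(\tilde p_\infty,0)\geq 1/(n-1)+\e_0$.

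Set $\sigma_0:=\sup_{\mc U}|\tilde A|^2/|\tilde H|^2\in[1/(n-1)+\e_0,\,c_n]$ (attained at the basepoint), and consider $\mc Q_0:=|\tilde A|^2-\sigma_0|\tilde H|^2\leq 0$, which reaches its interior maximum $0$ at $(\tilde p_\infty,0)$. The evolution inequality \eqref{eqn_pinchpres2} with $a=0$, $c=\sigma_0$ bounds $\partial_t\mc Q_0-\Delta\mc Q_0$ above by the sum of the gradient combination $-2(|\tilde\nabla\tilde A|^2-\sigma_0|\tilde\nabla\tilde H|^2)$, of terms proportional to $\mc Q_0\leq 0$, and of the reaction terms $(6-\tfrac{2}{n(\sigma_0-1/n)})|\circo A_1|^2|\circo A_-|^2+(3-\tfrac{2}{n(\sigma_0-1/n)})|\circo A_-|^4$. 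Throughout the interval $[1/(n-1),c_n]$ the first reaction coefficient is $\leq 0$ and the second is strictly negative (a direct check using $c_n\leq 4/(3n)$), while the gradient combination is $\leq 0$ by Kato's inequality \eqref{ineqn_Kato} since $\sigma_0\leq c_n<3/(n+2)$ for $n\geq 5$. The strong parabolic maximum principle therefore forces $\mc Q_0\equiv 0$ on a parabolic neighbourhood of $(\tilde p_\infty,0)$ and simultaneously forces each nonpositive term above to vanish identically there. The strictly negative coefficient of $|\circo A_-|^4$ gives $\circo A_-\equiv 0$, reducing $\tilde A$ to its mean-curvature component (effectively codimension one). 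Vanishing of the gradient combination, together with the strict Kato gap $\sigma_0<3/(n+2)$, then forces $\tilde\nabla\tilde A\equiv 0$, so the limit has parallel second fundamental form supported in a single normal direction. The classical classification of complete codimension-one Euclidean submanifolds with parallel second fundamental form gives $\tilde{\mc M}_\infty$ as a product $\sphere^p(r)\times\R^{n-p}$; matching $|\tilde A|^2/|\tilde H|^2=1/p=\sigma_0$ against the pinching $\sigma_0\leq c_n$ forces $p\in\{n-1,n\}$, so $\sigma_0\in\{1/n,\,1/(n-1)\}$, each strictly less than $1/(n-1)+\e_0$, contradicting the choice of $\sigma_0$.

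The hard part will be the rigidity step: upgrading pointwise equality at the maximum to identical vanishing of $\circo A_-$ and $\tilde\nabla\tilde A$ on a full spacetime neighbourhood requires Hamilton's strong maximum principle for tensors in addition to the scalar version, and the borderline case $\sigma_0=c_n$ (especially the would-be limit $\sphere^{n-2}\times\R^2$ with $|A|^2/|H|^2=1/(n-2)$ sitting on the boundary of the pinching cone for $n\geq 8$) must be excluded by carrying the positive parameter $a$ from \eqref{eqn_pinchpres2} through the rescaling to recover strict negativity of the reaction terms at the blow-up level.
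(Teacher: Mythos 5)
Your overall strategy --- blow-up by contradiction, $C^\infty$-compactness from the gradient and higher-derivative estimates, the strong maximum principle applied to $|A|^2/|H|^2$ on the limit flow, and Lawson's rigidity to identify the limit locally as $\sphere^p\times\R^{n-p}$ --- is exactly the paper's, and your rigidity step (using the signed coefficients in \eqref{eqn_pinchpres2} to force $\circo A_-\equiv 0$ before concluding $\tilde\nabla\tilde A\equiv 0$) is if anything more explicit than the paper's. But there is one genuine gap: the parenthetical claim that $\sigma_0=\sup_{\mc U}|\tilde A|^2/|\tilde H|^2$ is ``attained at the basepoint.'' With your choice of blow-up sequence --- an arbitrary sequence $(p_k,t_k)$ violating the estimate --- the basepoint of the limit only satisfies $|\tilde A|^2/|\tilde H|^2(\tilde p_\infty,0)\geq \frac{1}{n-1}+\e_0$; nothing prevents the ratio from being strictly larger elsewhere in $\mc U$, and if its maximum over the compact parabolic neighbourhood is attained only on the lateral spatial boundary or at the initial time slice, the strong maximum principle applied to $\mc Q_0=|\tilde A|^2-\sigma_0|\tilde H|^2$ yields nothing. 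The entire argument hinges on $\mc Q_0$ having an \emph{interior} space-time maximum equal to $0$, and as written you have not arranged this.

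The paper closes exactly this hole by selecting the sequence differently: it sets $\frac{1}{n-1}+\e:=\limsup_{t\to T_{\max}}\sup_{\mc M_t}|A|^2/|H|^2$ (the limit exists because the ratio is a priori trapped between $\frac1n$ and $c_n$) and chooses $(p_k,t_k)$ realizing this limsup. Then for each fixed $(p,\tau)$ the rescaled times $t_k+\hat r_k^2\tau$ tend to $T_{\max}$, so the limit ratio is $\leq\frac{1}{n-1}+\e$ everywhere on the limit flow with equality at the basepoint, which is therefore a genuine interior space-time maximum. Replace your selection of $(p_k,t_k)$ by this one (or by an equivalent point-picking argument) and the rest of your proof goes through. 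Your two remaining worries are less serious than you suggest: once the scalar strong maximum principle gives $\mc Q_0\equiv 0$, the identical vanishing of each nonpositive term follows by substituting back into the differential inequality, with no need for the tensor maximum principle; and the borderline case $\sigma_0=c_n$ is excluded not by the additive constant $a$ (which is not scale-invariant and dies under the rescaling) but by the scale-invariant form $|A|^2\leq(c_n-\e)|H|^2$ of the quadratic bound \eqref{eqn_pinching}, which passes to the limit and gives $\sigma_0\leq c_n-\e<\frac{1}{n-2}$, so that only $p\in\{n-1,n\}$ survives the classification.
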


\begin{proof}
Let $ T_{\max}$ denote the first singular time. By parabolic regularity, we must have that 
\begin{align*}
\lim_{t\rightarrow T_{\max}}\sup_{\mc M_t}|A(p,t) |^2 =+\infty.
\end{align*}
Furthermore, since 
\begin{align*}
\frac{1}{n}|H|^2< |A|^2 < \frac{1}{n-2}| H|^2 
\end{align*}
the second fundamental form $A$ and the mean curvature $H$ have the same blow up rate, so we must have 
\begin{align*}
\lim_{t\rightarrow T_{\max}}\sup_{\mc M_t}|H(p,t) |^2 =+\infty.
\end{align*}
Suppose the estimate is not true. Then there exists an $\e>0$ where we have 
\begin{align*}
\limsup_{t\rightarrow T_{\max}}\sup_{p \in \mc M_t}\frac{|A(p,t) |^2}{|H(p,t) |^2}= \frac{1}{n-1}+ \e > \frac{1}{n-1}.
\end{align*}
This limit exists since we have 
\begin{align*}
\frac{|A|^2}{|H|^2}\leq \frac{1}{n-2}.
\end{align*}
Furthermore there exists a sequence of points $ p_k $ and times $ t_k $ such that as $ k \rightarrow \infty $, $ t_k \rightarrow T_{\max}$  
and 
\begin{align*}
\lim_{k\rightarrow \infty}\frac{|A(p_k, t_k ) |^2}{|H(p_k, t_k ) |^2}= \frac{1}{n-1}+ \e .
\end{align*}
We perform a parabolic rescaling of $ \bar M_t^k $ in such a way that the norm of the mean curvature at $(p_k,t_k)$ becomes $n-1$. That is, if $F_k$ is the parameterisation of the original flow $ \mc M_t^k $, we let $ \hat r_k = \frac{n-1}{|H(p_k,t_k)|}$, and we denote the rescaled flow by $ \mc M_t^k $ and we define it as 
\begin{align*}
\bar F_k (p,\tau) = \frac{1}{\hat r_k}( F_k ( p , \hat r_k^2 \tau+t_k)-F_k (p_k , t_k) )
\end{align*}  

For simplicity, we choose for every flow a local co-ordinate system centred at $ p_k$. In these co-ordinates we can write $0$ instead of $ p_k$. The parabolic neighbourhoods $\mc P^k ( p_k, t_k, \hat r_k L, \hat r_k^2 \theta)$ in the original flow becomes $ \bar{\mc P}(0,0,\theta, L)$. By construction, each rescaled flow satisfies 
\begin{align*}
\bar F_k (0,0) = 0, \quad |\bar H_k (0,0) | = n-1. 
\end{align*}
The gradient estimates give us uniform bounds on $ |A|$ and its derivatives up to any order on a neighbourhood of the form $\bar{\mc P}( 0 ,0,d,d)$ for a suitable $ d > 0$. This gives us uniform estimates in $ C^\infty $ on $ \bar F_k$. Hence we can apply Arzela-Ascoli and conclude that there exists a subsequence that converges in $ C^\infty $ to some limit flow which we denote by $ \tilde M_\tau^\infty$. We now analyse the limit flow $ \tilde M_\tau^\infty$. Note that we have 
\begin{align*}
\bar A_k ( p , \tau ) = \hat r_k A_k ( p , \hat r_k^2 \tau+t_k ). 
\end{align*}
so that
\begin{align*}
\frac{|\bar A_k(p, \tau) |^2}{|H_k(p,\tau ) |^2}& = \frac{|A_k ( p, \hat r_k^2 \tau+t_k ) |^2}{|H_k(p, \hat r_k^2 \tau+t_k ) |^2} 
\end{align*}
but since $ \hat r_k \rightarrow 0, t_k \rightarrow T_{\max}$ as $ k\rightarrow \infty $ this implies that 
\begin{align*}
\frac{|\bar A ( p , \tau ) |^2}{| \bar  H (p, \tau ) |^2}=\lim_{k\rightarrow \infty}\frac{|\bar A_k(p,\tau)|^2}{|\bar H_k ( p, \tau ) |^2} \leq \frac{1}{n-1} +\e
\end{align*}
and 
\begin{align*}
\frac{| \bar A(0,0)|^2}{| \bar H(0,0)|^2}= \frac{1}{n-1}+\e.
\end{align*}
Hence the flow $\bar{\mc M}_t^\infty $ has a space-time maximum for $\frac{|\bar A ( p , \tau ) |^2}{| \bar  H (p, \tau ) |^2}$ at $ (0,0)$. Since the evolution equation for $ \frac{|A|^2}{|H|^2}$ is given by 
\begin{align*}
\partial_t \left ( \frac{|A|^2}{|H|^2}\right)-\triangle \left (\frac{|A|^2}{|H|^2}\right) & = \frac{2}{|H|^2}\left \la \nabla |H|^2 , \nabla \left ( \frac{|A|^2}{|H|^2}\right) \right \ra\\
&-\frac{2}{|H|^2} \left ( |\nabla A|^2-\frac{|A|^2}{|H|^2}|\nabla H|^2 \right) \\
&+\frac{2}{|H|^2}\left( R_1-\frac{|A|^2}{|H|^2} R_2\right)
\end{align*}
Now we have that 
\begin{align*}
|\nabla H|^2 \leq \frac{3}{n+2}|\nabla A|^2, \quad  \frac{|A|^2}{|H|^2}\leq c_n
\end{align*}
which gives
\begin{align*}
-\frac{2}{|H|^2} \left ( |\nabla A|^2-\frac{|A|^2}{|H|^2}|\nabla H|^2 \right) \leq 0.
\end{align*}
Furthermore if $\frac{|A|^2}{|H|^2}=c <  c_n$ then 
\begin{align*}
 R_1-\frac{|A|^2}{|H|^2} R_2=  R_1-c  R_2 \leq 0.
\end{align*}
Hence the strong maximum principle applies to the evolution equation of $\frac{|A|^2}{|H|^2}$ and shows that $\frac{|A|^2}{|H|^2}$ is constant. The evolution equation then shows that $ |\nabla A|^2= 0$, that is the second fundamental form is parallel. Finally this shows that locally $ \mc M = \mbb S^{n-k}\times \R^k$, \cite{Lawson1969}. As $\frac{|A|^2}{|H|^2}< c_n\leq  \frac{1}{n-2}$ we can only have 
\begin{align*}
\mbb S^n, \mbb S^{n-1}\times \R
\end{align*}
which gives $\frac{|A|^2}{|H|^2}= \frac{1}{n}, \frac{1}{n-1}$ which is a contradiction. 
\end{proof}


\begin{lemma}
Let $ \mc M_t , t \in [0, T)$ be a mean curvature flow. Let $\e, \theta, L, k \geq k_0$ be given where $k_0\geq 2$. Then we can find $ \eta_0>0, H_0>0$ with the following properties. Suppose that 
\begin{align*}
|H( p_0, t_0)|\geq H_0, \quad \&\quad \frac{|A|^2}{|H|^2}( p_0, t_0 ) \geq \frac{1}{n-1}-\eta_0   
\end{align*}   
then the neighbourhood 
\begin{align*}
\hat{\mc P}( p_0, t_0, L-1, \nicefrac{\theta}{2})
\end{align*}
is an $(\e,k,L-1, \frac{\theta}{2})$ shrinking neck.
\end{lemma}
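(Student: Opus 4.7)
The plan is to argue by contradiction, following the blow-up/compactness scheme used in the proof of Theorem~\ref{thm_cylindrical}. Assume the statement fails: for some fixed $\e,\theta,L$ and $k\geq k_0$, there exist sequences $\eta_0^j\searrow 0$, $H_0^j\nearrow\infty$, mean curvature flows $\mc M_t^j$, and points $(p_j,t_j)$ satisfying $|H(p_j,t_j)|\geq H_0^j$ and $|A|^2/|H|^2(p_j,t_j)\geq \frac{1}{n-1}-\eta_0^j$, yet the neighbourhood $\hat{\mc P}(p_j,t_j,L-1,\theta/2)$ fails to be an $(\e,k,L-1,\theta/2)$ shrinking neck.

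First I would parabolically rescale exactly as in the proof of Theorem~\ref{thm_cylindrical}. Set $\hat r_j = (n-1)/|H(p_j,t_j)|$ and define $\bar F_j(p,\tau) = \hat r_j^{-1}\bigl(F_j(p,\hat r_j^2\tau+t_j)-F_j(p_j,t_j)\bigr)$, so that $\bar F_j(0,0)=0$ and $|\bar H_j(0,0)|=n-1$. The Harnack-type Lemma immediately preceding the present statement shows that on a $j$-independent parabolic neighbourhood of $(0,0)$ in the rescaled flow, the mean curvature remains comparable to $n-1$. Iterating this estimate a fixed finite number of times (depending only on $L$ and $\theta$) propagates the curvature bound throughout the rescaled parabolic cylinder $\bar{\mc P}(0,0,L-1,\theta/2)$. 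Combined with the pointwise gradient estimate of Theorem~\ref{thm_gradient}, the quantitative bound $|\nabla^2 A|^2\leq \gamma_3|A|^6+\gamma_4$ just proved, and a standard parabolic bootstrap on the evolution equations of the higher covariant derivatives $\nabla^m A$, this yields uniform $C^m$ bounds on $\bar F_j$ on $\bar{\mc P}(0,0,L-1,\theta/2)$ for every $m$.

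Second, Arzel\`a--Ascoli extracts a subsequence converging in $C^\infty_{\mathrm{loc}}$ to a smooth limit mean curvature flow $\tilde{\mc M}_\tau^\infty$ on $\bar{\mc P}(0,0,L-1,\theta/2)$. By Theorem~\ref{thm_cylindrical} applied at the unscaled level, the pinching ratio on each $\bar{\mc M}_\tau^j$ is bounded above by $\frac{1}{n-1}+o_j(1)$, so in the limit $|\tilde A|^2/|\tilde H|^2 \leq \frac{1}{n-1}$ everywhere, with equality attained at $(0,0)$. The rigidity step from the proof of Theorem~\ref{thm_cylindrical} then applies verbatim: the strong maximum principle for the evolution equation of $|A|^2/|H|^2$ forces the ratio to be identically $\frac{1}{n-1}$, and the same evolution equation in turn forces $\tilde\nabla\tilde A\equiv 0$. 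Lawson's classification \cite{Lawson1969} of submanifolds with parallel second fundamental form identifies $\tilde{\mc M}_\tau^\infty$ locally as $\sphere^{n-1}\times\R$, and the normalisation $|\tilde H(0,0)|=n-1$ pins the radius of the spherical factor to $1$ at $\tau=0$. Hence the limit is the standard shrinking cylinder $\sphere^{n-1}(\sqrt{1-2(n-1)\tau})\times\R$.

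Finally, $C^\infty_{\mathrm{loc}}$ convergence to the standard shrinking cylinder implies that for $j$ sufficiently large, the rescaled neighbourhood is within $\e$ in $C^k$-norm of the standard cylinder on $\bar{\mc P}(0,0,L-1,\theta/2)$, i.e.\ it \emph{is} an $(\e,k,L-1,\theta/2)$ shrinking neck, contradicting the failure assumption. The principal obstacle in carrying this out is ensuring $C^\infty_{\mathrm{loc}}$ convergence on the \emph{entire} parabolic neighbourhood $\bar{\mc P}(0,0,L-1,\theta/2)$: the Harnack lemma alone provides control only on a parabolic neighbourhood of fixed small size $d^\#$, so propagating the curvature bounds both spatially (out to radius $L-1$) and backwards in time (over duration $\theta/2$) requires a finite chain of applications of the Harnack and gradient estimates, with careful bookkeeping to verify that the mean curvature does not degenerate at any step, so that the rescaling factor $\hat r_j$ remains uniformly comparable throughout the neighbourhood.
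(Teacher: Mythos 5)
Your proposal is correct and follows essentially the same route as the paper: contradiction, parabolic rescaling normalising $|\bar H(0,0)|=n-1$, uniform curvature and derivative bounds from the gradient/Harnack estimates, Arzel\`a--Ascoli, the cylindrical estimate of Theorem~\ref{thm_cylindrical} to force $|\tilde A|^2/|\tilde H|^2\leq\frac{1}{n-1}$ with equality at $(0,0)$, and then the strong maximum principle plus Lawson's rigidity to identify the limit as the shrinking cylinder, contradicting the assumed failure of the neck property. If anything, your treatment of the final identification of the limit and of propagating the curvature bounds across the full parabolic neighbourhood is more explicit than the paper's own write-up.
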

\begin{proof}
We proceed by a proof by contradiction. Hence there exists $ \e , L, \theta$ such that the conclusion of the theorem is not true. This implies that we can find a sequence of mean curvature flows $\{\mc M_t^j \}_{j \geq 1}$ and sequence of times $ \{t_j \}_{j \geq 1}$ and points $ \{p_j\}_{j \geq 1}, p_j \in \mc M_{t_j}^j$ such that for the space-time points $ \{p_j, t_j\}_{j\geq 1}$ at the image points $ F_j( p_j, t_j) \in \mc M_{t_j}^j$ then as $ t \rightarrow T_{\max}, |H(p_j, t_j )| \rightarrow \infty  $ and
\begin{align*}
\frac{|A|^2(p_j ,t_j )}{|H|^2 ( p_j ,t_j )}\rightarrow \frac{1}{n-1}
\end{align*}  
but $ ( p_j, t_j )$ does not lie at the centre of a $ ( \e_k, L-1, \frac{\theta}{2})$ neck. Note that $ \frac{|A|^2}{|H|^2}<c _n \leq \frac{1}{n-2}$ so the gradient estimates apply. 

We perform a parabolic rescaling of $ \bar M_t^k $ in such a way that the norm of the mean curvature at $(p_k,t_k)$ becomes $n-1$. That is, if $F_k$ is the parameterisation of the original flow $ \mc M_t^k $, we let $ \hat r_k = \frac{n-1}{|H(p_k,t_k)|}$, and we denote the rescaled flow by $ \mc M_t^k $ and we define its parameterisation as 
\begin{align*}
\bar F_k (p,\tau) = \frac{1}{\hat r_k}( F_k ( p , \hat r_k^2 \tau+t_k)-F_k (p_k , t_k) )
\end{align*}  

For simplicity, we choose for every flow a local co-ordinate system centered at $ p_k$. In these co-ordinates we can write $0$ instead of $ p_k$. The parabolic neighbourhoods $\mc P^k ( p_k, t_k, \hat r_k L, \hat r_k^2 \theta)$ in the original flow becomes $ \bar{\mc P}(0,0,L,\theta)$. By construction, each rescaled flow satisfies 
\begin{align*}
\bar F_k (0,0) = 0, \quad |\bar H_k (0,0) | = n-1. 
\end{align*}
The gradient estimates give us uniform bounds on $ |A|$ and its derivatives up to any order on a neighbourhood of the form $\bar{\mc P}( 0 ,0,d,d)$ for a suitable $ d > 0$. This gives us uniform estimates in $ C^\infty $ on $ \bar F_k$. Hence we can apply Arzela-Ascoli and conclude that there exists a subsequence that converges in $ C^\infty $ to some limit flow which we denote by $ \tilde M_\tau^\infty$. We now analyse the limit flow $ \tilde M_\tau^\infty$. Note that we have 
\begin{align*}
\bar A_k ( p , \tau ) = \hat r_k A_k ( p , \hat r_k^2 \tau+t_k ),
\end{align*}
so that
\begin{align*}
\frac{|\bar A_k(p, \tau) |^2}{|H_k(p,\tau ) |^2}& = \frac{|A_k ( p, \hat r_k^2 \tau+t_k ) |^2}{|H_k(p, \hat r_k^2 \tau+t_k ) |^2} 
\end{align*}
but since $ \hat r_k \rightarrow 0, t_k \rightarrow T_{\max}$ as $ k\rightarrow \infty $ this implies that 
\begin{align*}
\frac{|\bar A ( p , \tau ) |^2}{| \bar  H (p, \tau ) |^2}=\lim_{k\rightarrow \infty}\frac{|\bar A_k(p,\tau)|^2}{|\bar H_k ( p, \tau ) |^2} \leq \frac{1}{n-1}
\end{align*}
and 
\begin{align*}
\frac{| \bar A(0,0)|^2}{| \bar H(0,0)|^2}= \frac{1}{n-1 }.
\end{align*}
Hence the flow $\bar{\mc M}_t^\infty $ has a space-time maximum for $\frac{|\bar A ( p , \tau ) |^2}{| \bar  H (p, \tau ) |^2}$ at $ (0,0)$. Since the evolution equation for $ \frac{|A|^2}{|H|^2}$ is given by 
\begin{align*}
\partial_t \left ( \frac{|A|^2}{|H|^2}\right)-\triangle \left (\frac{|A|^2}{|H|^2}\right) & = \frac{2}{|H|^2}\left \la \nabla |H|^2 , \nabla \left ( \frac{|A|^2}{|H|^2}\right) \right \ra\\
&-\frac{2}{|H|^2} \left ( |\nabla A|^2-\frac{|A|^2}{|H|^2}|\nabla H|^2 \right) \\
&+\frac{2}{|H|^2}\left( R_1-\frac{|A|^2}{|H|^2} R_2\right).
\end{align*}
Now we have that 
\begin{align*}
|\nabla H|^2 \leq \frac{3}{n+2}|\nabla A|^2, \quad  \frac{|A|^2}{|H|^2}<c _n\leq \frac{1}{n-2}
\end{align*}
which gives
\begin{align*}
-\frac{2}{|H|^2} \left ( |\nabla A|^2-\frac{|A|^2}{|H|^2}|\nabla H|^2 \right) \leq 0.
\end{align*}
Furthermore if $\frac{|A|^2}{|H|^2}=c<c _n  \leq  \frac{1}{n-2}$ then 
\begin{align*}
 R_1-\frac{|A|^2}{|H|^2} R_2=  R_1-c  R_2 \leq 0.
\end{align*}
Hence the strong maximum principle applies to the evolution equation of $\frac{|A|^2}{|H|^2}$ and shows that $\frac{|A|^2}{|H|^2}$ is constant. The evolution equation then shows that $ |\nabla A|^2= 0$, that is the second fundamental form is parallel. Finally this shows that locally $ \mc M = \mbb S^{n-k}\times \R^k$, (see \cite{Lawson1969}). As $\frac{|A|^2}{|H|^2}< \frac{1}{n-2}$ we can only have 
\begin{align*}
\mbb S^n, \mbb S^{n-1}\times \R
\end{align*}
which gives $\frac{|A|^2}{|H|^2}(x)= \frac{1}{n}, \frac{1}{n-1}$ for every $(p,t)$ which is a contradiction. 

\end{proof}

The following theorem and proposition are independent of mean curvature flow and concern the compactness of manifolds (and submanifolds) subject to a curvature inequality. 
\begin{theorem}[Bonnet-Myers, Hopf-Rinow] 
Let $ \mc M$ be a complete Riemannian manifold and suppose that $ p \in \mc M $ such that the sectional curvature satisfies $ K > K_{\min}$ along all geodesics of length $\frac{\pi}{K_{\min}}$ from $p$ or equivalently $K > K_{\min}$ in a neighbourhood $d(p,q) \leq \frac{\pi}{\sqrt{K_{\min}}}$.    

\end{theorem}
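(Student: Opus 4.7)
The plan is to prove the standard Bonnet--Myers conclusion (bounded diameter and hence compactness) by a second-variation argument applied to a minimising geodesic of hypothetically excessive length. Concretely, assume for contradiction that there exists $q\in\mc M$ with $d(p,q)>\pi/\sqrt{K_{\min}}$; since $\mc M$ is complete, Hopf--Rinow provides a unit-speed minimising geodesic $\gamma:[0,L]\to\mc M$ from $p$ to $q$ of length $L=d(p,q)$, entirely contained in the neighbourhood on which the curvature hypothesis applies.

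Next I would produce a candidate competitor by perturbing $\gamma$. Choose an orthonormal basis $\{e_1(0),\dots,e_{n-1}(0)\}$ of $\dot\gamma(0)^\perp\subset T_p\mc M$, parallel transport it along $\gamma$ to get an orthonormal frame $\{e_i(s)\}$ perpendicular to $\dot\gamma(s)$, and set
\begin{equation*}
V_i(s)=\sin(\pi s/L)\,e_i(s),\qquad i=1,\dots,n-1.
\end{equation*}
Each $V_i$ vanishes at the endpoints, so the second variation of arc length is the index form
\begin{equation*}
I(V_i,V_i)=\int_0^L\bigl(|\nabla_s V_i|^2-R(V_i,\dot\gamma,V_i,\dot\gamma)\bigr)\,ds.
\end{equation*}
Parallel transport gives $|\nabla_s V_i|^2=(\pi/L)^2\cos^2(\pi s/L)$, while $R(V_i,\dot\gamma,V_i,\dot\gamma)=\sin^2(\pi s/L)\,K(e_i,\dot\gamma)$.

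Summing over $i$ and using $\sum_i K(e_i,\dot\gamma)=\Rc(\dot\gamma,\dot\gamma)\geq(n-1)K_{\min}$ (which follows from the pointwise sectional curvature bound along $\gamma$) and the elementary integrals $\int_0^L\cos^2(\pi s/L)\,ds=\int_0^L\sin^2(\pi s/L)\,ds=L/2$ yields
\begin{equation*}
\sum_{i=1}^{n-1}I(V_i,V_i)\leq\frac{(n-1)}{2L}\bigl(\pi^2-K_{\min}L^2\bigr).
\end{equation*}
For $L>\pi/\sqrt{K_{\min}}$ the right-hand side is strictly negative, so some individual $I(V_i,V_i)<0$; this contradicts the fact that $\gamma$, being length-minimising, must have nonnegative index form on every admissible variation. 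Hence $d(p,q)\leq\pi/\sqrt{K_{\min}}$ for every $q\in\mc M$, so $\diam(\mc M)\leq\pi/\sqrt{K_{\min}}$, and then Hopf--Rinow promotes the closed bounded metric space $\mc M$ to a compact one.

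The only delicate point is making sure the hypothesis of the theorem really gives the Ricci lower bound all along $\gamma$: this is exactly why the statement is phrased with the neighbourhood of radius $\pi/\sqrt{K_{\min}}$ around $p$, since one must know $K\geq K_{\min}$ on every two-plane tangent to $\gamma$ for $s\in[0,L]$ with $L$ just past the critical length. Once that has been checked, the computation above is routine and the contradiction is immediate; no global curvature assumption or completeness beyond geodesic completeness is required.
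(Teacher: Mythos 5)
The paper itself offers no proof of this statement -- it is quoted as a classical result (and, as stated, is missing both its conclusion and a square root in ``geodesics of length $\frac{\pi}{K_{\min}}$''), so the only thing to assess is whether your standard second-variation argument is watertight under the \emph{local} curvature hypothesis. It is not quite, and the gap is exactly the one you flag in your last paragraph and then decline to close. If $d(p,q)=L>\pi/\sqrt{K_{\min}}$, then since $\gamma$ is minimising we have $d(p,\gamma(s))=s$, so the portion of $\gamma$ with $s>\pi/\sqrt{K_{\min}}$ lies \emph{outside} the neighbourhood $d(p,\cdot)\leq\pi/\sqrt{K_{\min}}$ on which the hypothesis controls the curvature. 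Your claim that $\gamma$ is ``entirely contained in the neighbourhood on which the curvature hypothesis applies'' is therefore false, and the bound $\Rc(\dot\gamma,\dot\gamma)\geq(n-1)K_{\min}$ is unavailable on all of $[0,L]$; the computation as written does not yield a contradiction.

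The repair is short and you should carry it out explicitly. By continuity of $d(p,\cdot)$ along $\gamma$ there is a point $q'=\gamma(L')$ with $L'=\pi/\sqrt{K_{\min}}$; the segment $\gamma|_{[0,L']}$ is still minimising and lies entirely in the closed neighbourhood where $K>K_{\min}$ holds strictly. Since the sectional curvature is continuous on the compact set of two-planes tangent to this segment, its minimum there is some $K'>K_{\min}$, and your index-form computation applied to $\gamma|_{[0,L']}$ gives
\begin{equation*}
\sum_{i=1}^{n-1}I(V_i,V_i)\leq\frac{n-1}{2L'}\bigl(\pi^2-K'(L')^2\bigr)<0,
\end{equation*}
because $K'(L')^2>K_{\min}\cdot\pi^2/K_{\min}=\pi^2$. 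This contradicts the nonnegativity of the index form of a minimising segment, so $\diam(\mc M)\leq\pi/\sqrt{K_{\min}}$ and Hopf--Rinow gives compactness. Note that the strictness of the hypothesis $K>K_{\min}$ is what makes the critical-length segment already work; with a non-strict bound you would instead take $L'$ slightly larger and would genuinely need the curvature control just past the critical radius, which the theorem as stated does not provide. Everything else in your argument (the choice of parallel frame, the variation fields $\sin(\pi s/L)e_i$, the evaluation of the integrals, and the passage from $\sum_iI(V_i,V_i)<0$ to some $I(V_i,V_i)<0$) is correct and is the standard Bonnet--Myers proof.
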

\begin{proposition}[B.Y Chen \cite{Chen1993}] 
For $ n \geq 2 $ if $ \M^n$ is a submanifold of $ \R^{n+k}$ then at every $ p \in \M^n$, 
\begin{align*}
K_{\min}\geq \frac{1}{2}\left( \frac{1}{n-1}|H(p)|^2-|A(p)|^2\right).
\end{align*}

\end{proposition}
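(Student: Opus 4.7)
The plan is to reduce the inequality, via the Gauss equation at $p$, to a purely algebraic statement about the components of the second fundamental form, and then to dispatch that statement by the Cauchy--Schwarz inequality. First I would fix an orthonormal tangent frame $e_1,\dots,e_n$ of $T_p\M$ chosen so that $K_{\min}=K(e_1,e_2)$, and an orthonormal normal frame $e_{n+1},\dots,e_{n+k}$. Writing $h^{\alpha}_{ij}=\la A(e_i,e_j),e_\alpha\ra$, the Gauss equation for a submanifold of $\R^{n+k}$ gives
\begin{equation*}
K(e_1,e_2)=\sum_\alpha\bigl(h^\alpha_{11}h^\alpha_{22}-(h^\alpha_{12})^2\bigr),
\end{equation*}
while $|A|^2=\sum_{\alpha,i,j}(h^\alpha_{ij})^2$ and $|H|^2=\sum_\alpha\bigl(\sum_i h^\alpha_{ii}\bigr)^2$. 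The target inequality is therefore equivalent to $|A|^2+2K_{\min}\geq \tfrac{1}{n-1}|H|^2$.

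Next I would expand $|A|^2+2K_{\min}$ one normal direction at a time, using the identity $2h^\alpha_{11}h^\alpha_{22}=(h^\alpha_{11}+h^\alpha_{22})^2-(h^\alpha_{11})^2-(h^\alpha_{22})^2$. The contribution $-2(h^\alpha_{12})^2$ coming from $2K_{\min}$ cancels the $(1,2)$-off-diagonal term $2(h^\alpha_{12})^2$ in $|A|^2$, and a short regrouping yields
\begin{equation*}
\sum_{i,j}(h^\alpha_{ij})^2+2\bigl(h^\alpha_{11}h^\alpha_{22}-(h^\alpha_{12})^2\bigr)=(h^\alpha_{11}+h^\alpha_{22})^2+\sum_{i\geq 3}(h^\alpha_{ii})^2+2\!\!\sum_{\substack{i<j\\ (i,j)\neq(1,2)}}\!\!(h^\alpha_{ij})^2.
\end{equation*}
The last, off-diagonal, sum is manifestly nonnegative, and I would simply discard it.

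Finally, setting $u_1:=h^\alpha_{11}+h^\alpha_{22}$ and $u_i:=h^\alpha_{ii}$ for $3\leq i\leq n$, the $n-1$ numbers $u_1,u_3,\dots,u_n$ have sum $\sum_i h^\alpha_{ii}$, so Cauchy--Schwarz gives
\begin{equation*}
(h^\alpha_{11}+h^\alpha_{22})^2+\sum_{i\geq 3}(h^\alpha_{ii})^2\geq \frac{1}{n-1}\Bigl(\sum_i h^\alpha_{ii}\Bigr)^2.
\end{equation*}
Summing over $\alpha$ produces exactly $|A|^2+2K_{\min}\geq \tfrac{1}{n-1}|H|^2$, which rearranges to the claimed bound. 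I do not expect a substantive obstacle here; the only genuine moment of choice is to repackage $h^\alpha_{11}+h^\alpha_{22}$ as a \emph{single} variable, so that Cauchy--Schwarz runs over precisely $n-1$ quantities and delivers the sharp factor $\tfrac{1}{n-1}$.
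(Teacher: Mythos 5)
Your argument is correct: the Gauss equation reduction, the regrouping of $2h^\alpha_{11}h^\alpha_{22}$ into the square $(h^\alpha_{11}+h^\alpha_{22})^2$, and the Cauchy--Schwarz step over the $n-1$ quantities $h^\alpha_{11}+h^\alpha_{22},h^\alpha_{33},\dots,h^\alpha_{nn}$ all check out, and this is precisely the standard algebraic proof in Chen's cited paper. The paper itself offers no proof (it simply cites \cite{Chen1993}), so there is nothing further to compare against.
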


The following theorem shows that either the surface is compact or there exists a neck region.  
\begin{theorem}
Let $ F: \mc M \rightarrow \R^{n+k}$, be a smooth connected immersed submanifold. Suppose that there exists $c^{\#}, H^{\#}>0$ such that $ | \nabla H( p ) | \leq c^{\#}|H(p)|^2 $ for all $ p \in \mc M$ where $ | H(p) | \geq H^{\#}$. Then $\forall \eta_0 >0$ there exists $\alpha_0 =  \alpha_0(c^\#,\eta_0 ), \gamma_0 = \gamma_0 (c^\#,\eta_0 )$, if $ p \in \mc M $ satisfies 
\begin{align*}
\frac{| A|^2}{|H|^2}( p) < \frac{1}{n-1}-\eta_0, \quad |H(p)| \geq \gamma_0 H^{\#}.  
\end{align*}  

Then either $ \mc M$ is closed with 
\begin{align*}
\frac{|A|^2}{|H|^2}< \frac{1}{n-1}-\eta_0 \quad \text{everywhere}
\end{align*}
or there exists a $q \in \mc M$ such that
\begin{enumerate}
\item \begin{align*}
\frac{|A|^2}{|H|^2}(q) \geq \frac{1}{n-1}-\eta_0 
\end{align*}

\item 
\begin{align*}
d(p,q) \leq \frac{\alpha_0}{|H(p) |}
\end{align*}

\item 
\begin{align*}
|H(q')| \geq \frac{|H(p) |}{\gamma_0}, \quad \forall q'\in \mc M \mid d(p,q') \leq \frac{\alpha_0}{|H(p)|}   
\end{align*}
in particular $ | H(q) | \geq \frac{|H(p) |}{\gamma_0}$.   
\end{enumerate}
\end{theorem}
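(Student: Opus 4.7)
The plan is a proof by contradiction combining the local Harnack estimate (Lemma \ref{lem_localHarnack}), Chen's inequality, and Myers' diameter theorem in an integrated form. Suppose $p$ satisfies the hypotheses but no $q \in \mc M$ satisfies conclusions (1)--(3). First I would couple the constants by setting $\alpha_0 = (\gamma_0 - 1)/c^{\#}$, so that Lemma \ref{lem_localHarnack} applied at $p$ (using $|H(p)| \ge \gamma_0 H^{\#}$) automatically delivers conclusion (3) for every $q' \in \mc M$ with $d(p, q') \le \alpha_0/|H(p)|$. Under this coupling, the failure of the alternative reduces to the strict pinching $|A|^2/|H|^2 < 1/(n-1) - \eta_0$ holding throughout the ball of radius $\alpha_0/|H(p)|$ about $p$.

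Next I would use Chen's inequality to convert the pinching into a pointwise sectional curvature bound on this ball:
\begin{align*}
K_{\min}(q) \ge \tfrac{1}{2}\bigl(\tfrac{1}{n-1}|H(q)|^2 - |A(q)|^2\bigr) > \tfrac{\eta_0}{2}|H(q)|^2.
\end{align*}
Propagating the Harnack estimate along a unit-speed geodesic $\gamma$ issuing from $p$ gives $|H(\gamma(s))| \ge |H(p)|/(1 + c^{\#} s |H(p)|)$, and hence
\begin{align*}
\sqrt{K_{\min}(\gamma(s))} \ge \sqrt{\tfrac{\eta_0}{2}}\,\frac{|H(p)|}{1 + c^{\#} s |H(p)|}.
\end{align*}
The integrand decays only like $1/s$, so its integral diverges logarithmically in the length.

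To close the argument, I would invoke Myers' theorem in its integrated form: if $\int_0^L \sqrt{K_{\min}(\gamma(s))}\,ds \ge \pi$ along every unit-speed geodesic from $p$ of length $L$ staying in the ball, then $\diam(\mc M) \le L$. Direct integration up to $L = \alpha_0/|H(p)|$ yields $\sqrt{\eta_0/2}\,\ln(\gamma_0)/c^{\#}$, so setting $\gamma_0 = \exp(\pi c^{\#} \sqrt{2/\eta_0})$ makes this exactly $\pi$. Any minimizing geodesic from $p$ of length at most $L$ stays in the closed ball of radius $L$ about $p$ (since $\gamma(s) \in \partial \mbb{B}_{s}(p)$), so the sectional curvature bound applies along it, and Hopf-Rinow together with Myers force $\mc M \subset \overline{\mbb{B}_{L}(p)}$; the strict pinching then holds on all of $\mc M$, giving the first alternative.

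The main obstacle is the balancing of constants: enlarging $\alpha_0$ gives more geodesic room for Myers to force a conjugate point, but it forces $\gamma_0$ larger and thus weakens both the Harnack-based lower bound on $|H|$ and, via Chen, the lower bound on $K_{\min}$. The logarithmic divergence of $\int \sqrt{K_{\min}}\,ds$ is precisely what allows both constraints to be met simultaneously, though the tension is reflected in the exponential dependence $\gamma_0 \sim e^{\pi c^{\#} \sqrt{2/\eta_0}}$ as $\eta_0 \to 0$.
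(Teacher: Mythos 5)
Your setup agrees with the paper's: the coupling $\gamma_0 = 1 + c^\#\alpha_0$ (equivalently $\alpha_0 = (\gamma_0-1)/c^\#$) together with Lemma \ref{lem_localHarnack} gives conclusion (3) for free, and Chen's inequality converts the assumed pinching on the ball into $K_{\min}(q)\ge\frac{\eta_0}{2}|H(q)|^2$. The divergence from the paper, and the gap, is in the final step. The ``integrated form'' of Myers' theorem you invoke --- that $\int_0^L\sqrt{K_{\min}(\gamma(s))}\,ds\ge\pi$ along every unit-speed geodesic from $p$ forces a conjugate point and hence compactness --- is not a theorem. By Sturm comparison the relevant question is whether the scalar equation $u''+k(s)u=0$, $u(0)=0$, $u'(0)=1$, with $k(s)=\frac{\eta_0}{2}\,\frac{|H(p)|^2}{(1+c^\# s|H(p)|)^2}$, has a zero on $(0,L]$. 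After rescaling $\tau=c^\# s|H(p)|$ this becomes the Euler equation $u''+\frac{c^2}{(1+\tau)^2}u=0$ with $c^2=\frac{\eta_0}{2(c^\#)^2}$, whose solutions are $(1+\tau)^{\alpha_\pm}$, $\alpha_\pm=\frac{1\pm\sqrt{1-4c^2}}{2}$. When $c^2\le\frac14$, i.e.\ $\eta_0\le\frac{(c^\#)^2}{2}$, the exponents are real and $(1+\tau)^{\alpha_+}-(1+\tau)^{\alpha_-}$ is positive for all $\tau>0$: the comparison equation is disconjugate on all of $[0,\infty)$ even though $\int\sqrt{k}=\infty$. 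This is the classical Kneser--Hille borderline ($\liminf s^2k(s)$ versus $\frac14$), and your lower bound sits exactly on the non-oscillatory side precisely when $\eta_0$ is small. So the logarithmic divergence you rely on does not produce conjugate points, and the choice $\gamma_0=e^{\pi c^\#\sqrt{2/\eta_0}}$ does not close the argument.

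The paper avoids the pointwise decaying bound altogether: the Harnack lemma gives the uniform bound $|H(q)|\ge|H(p)|/\gamma_0$ on the whole ball $d(p,q)\le\alpha_0/|H(p)|$, hence a constant lower bound $K_{\min}\ge\frac{\eta_0}{2}\gamma_0^{-2}|H(p)|^2$ there, and then the classical Bonnet--Myers applies once the ball radius exceeds $\pi/\sqrt{K_{\min}}$. The tension you correctly identify --- enlarging $\alpha_0$ enlarges $\gamma_0$ and so weakens both the Harnack bound and the curvature bound --- is genuine, and in fact the paper's own choice $\alpha_0=\sqrt{2}\pi/\sqrt{\eta_0}$ glosses over the resulting $\gamma_0$-dependence; but the resolution has to come from within the constant-comparison framework (e.g.\ restricting the admissible range of $\eta_0$ relative to $c^\#$, or sharpening the Harnack step), not from the $\int\sqrt{K}\ge\pi$ criterion, which is false.
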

\begin{proof}
Given $ \alpha_0$,  we set $ \gamma_0 = 1+c^\#\alpha_0$. Then for a given $ p\in\mc M$, we let
\begin{align*}
\mc M_{p,\alpha_0}= \left\{q \in \mc M \mid d(p,q) \leq \frac{\alpha_0}{|H(p)|}\right\}.
\end{align*}
Then we have that if $ |H(p) | \geq \gamma_0 H^\#$, then
\begin{align*}
|H(q)| \geq \frac{|H(p) |}{1+c^\#d(p,q) |H(p)|}\geq \frac{|H(p) |}{\gamma_0}. 
\end{align*}
We show that if $ \alpha_0$ is sufficiently large then this implies $\mc M$ is compact. Suppose that we have 
\begin{align*}
\frac{|A|^2}{|H|^2}(q) < \frac{1}{n-1}-\eta_0, \quad \forall q \in \mc M_{q, \alpha_0}.
\end{align*}

We have that on $\mc M_{q, \alpha_0}$
\begin{align*}
K_{\min}\geq \frac{1}{2}\left ( \frac{1}{n-1}|H(p)|^2-|A(p)|^2 \right)\geq \frac 1 2 \eta_0 | H( p ) |^2\geq \frac{\eta_0}{2}H_{\min}^2> 0.   
\end{align*}
Applying the above lemma, if we choose $ \alpha_0 = \frac{\sqrt{2}\pi}{\sqrt{\eta_0}} $ then we have that 
\begin{align*}
K_{\min} > 0 \quad \text{in a neighbourhood } d(p,q) \leq \frac{\alpha_0}{| H( p ) |}\leq  \frac{\pi}{K_{\min}}
\end{align*}
which shows that $ \mc M $ is compact.
\end{proof}

The following corollary shows that before the first singular time either a cylindrical neck exists or the flow becomes spherical. 
\begin{corollary}
Let $ \mc M_t $ be a smooth mean curvature flow of a closed submanifold with $ |A|^2 \leq c_n|H|^2$. Given neck parameters $ \e,k,L$ there exists $ H^*$ depending on the initial data such that $ \sup_{p \in \mc M_{t_0}}H(p, t_0) \geq H^*$ then the submanifold at time $ t_0$ either contains an $ ( \e, k, L )$-cylindrical neck or is spherically pinched, that is \begin{align*}
|A|^2 - \frac{1}{n-1}|H|^2 < 0. 
\end{align*}.
\end{corollary}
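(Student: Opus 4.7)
The plan is to combine the two preparatory results proved just above: the lemma that recognises neck regions from pinching, and the dichotomy/compactness theorem at points where the ratio is strictly below $\tfrac{1}{n-1}$. First, I fix the neck parameters $\e,k,L$ and some standard $\theta>0$, and apply the neck-recognition lemma (with slack in $L$) to produce constants $\eta_0>0$ and $H_0>0$ such that whenever
\begin{equation*}
|H(q,t_0)|\geq H_0\qquad\text{and}\qquad \frac{|A|^2}{|H|^2}(q,t_0)\geq \frac{1}{n-1}-\eta_0,
\end{equation*}
the parabolic neighbourhood $\hat{\mc P}(q,t_0,L,\theta/2)$ is an $(\e,k,L,\theta/2)$-shrinking neck. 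With the \emph{same} $\eta_0$, I apply the compactness/dichotomy theorem to obtain constants $\alpha_0=\alpha_0(c^\#,\eta_0)$ and $\gamma_0=\gamma_0(c^\#,\eta_0)$, where $c^\#,H^\#$ are the constants from the gradient-estimate corollary. Finally I set $H^*:=\gamma_0\max\{H_0,H^\#\}$.

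Now let $p^*\in\mc M_{t_0}$ realise the maximum of $|H|$; by hypothesis $|H(p^*,t_0)|\geq H^*\geq \gamma_0 H^\#$. I split on the value of $|A|^2/|H|^2$ at $(p^*,t_0)$. If $\tfrac{|A|^2}{|H|^2}(p^*,t_0)\geq \tfrac{1}{n-1}-\eta_0$, then the neck-recognition lemma applies directly at $(p^*,t_0)$ and produces the desired $(\e,k,L)$-cylindrical neck. Otherwise $\tfrac{|A|^2}{|H|^2}(p^*,t_0)<\tfrac{1}{n-1}-\eta_0$, and the compactness theorem forces one of two alternatives: either $\tfrac{|A|^2}{|H|^2}<\tfrac{1}{n-1}-\eta_0$ throughout $\mc M_{t_0}$, which immediately yields the spherical pinching $|A|^2-\tfrac{1}{n-1}|H|^2<0$ globally; or there exists $q\in\mc M_{t_0}$ with
\begin{equation*}
\frac{|A|^2}{|H|^2}(q,t_0)\geq \frac{1}{n-1}-\eta_0\qquad\text{and}\qquad |H(q,t_0)|\geq \frac{|H(p^*,t_0)|}{\gamma_0}\geq H_0.
\end{equation*}
In this last case I apply the neck-recognition lemma at $(q,t_0)$ and again obtain the required cylindrical neck.

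The main obstacle is really bookkeeping rather than substance: one must choose the pinching tolerance $\eta_0$ simultaneously for the neck lemma and the dichotomy theorem, and the threshold $H^*$ large enough to absorb both the neck lemma's $H_0$ and the factor-$\gamma_0$ loss of mean curvature that may occur when the dichotomy theorem relocates from $p^*$ to the nearby point $q$. Once these dependencies are tracked, the dichotomy on the ratio at $p^*$ closes the argument in two lines each.
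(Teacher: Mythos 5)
Your proposal is correct and is exactly the intended derivation: the paper states this corollary without proof, and the natural argument is precisely your assembly of the neck-recognition lemma and the compactness/dichotomy theorem, with the threshold $H^* = \gamma_0\max\{H_0,H^\#\}$ chosen to survive the factor-$\gamma_0$ loss when passing from the maximum-curvature point $p^*$ to the nearby point $q$. The case split at $p^*$ and the bookkeeping of $\eta_0$, $H_0$, $\gamma_0$ are all handled correctly.
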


\end{document}